\DeclareMathAlphabet\boldsymbolcal{OMS}{cmsy}{b}{n}
\newlength\figureheight
\newlength\figurewidth
\newcommand{\blueline}{\raisebox{2pt}{\tikz{\draw[-,blue,solid,line width = 1pt](0,0) -- (5mm,0);}}}
\newcommand{\redline}{\raisebox{2pt}{\tikz{\draw[-,red,solid,line width = 1pt](0,0) -- (5mm,0);}}}
\newtheorem{theorem}{Theorem}
\newtheorem{proposition}[theorem]{Proposition}
\newtheorem{remark}[theorem]{Remark}
\newtheorem{coro}[theorem]{Corollary}
\title{An error indicator-based adaptive reduced order model for nonlinear structural mechanics -- application to high-pressure turbine blades}
\author{Fabien Casenave $^{1}$, Nissrine Akkari $^{1}$\\
$^{1}$ Safran Tech, Modelling \& Simulation, Rue des Jeunes Bois,\\
Châteaufort, 78114 Magny-Les-Hameaux, France}
\begin{document}

\maketitle

\section*{Abstract}
The industrial application motivating this work is the fatigue computation of aircraft engines' high-pressure turbine blades. The material model involves nonlinear elastoviscoplastic behavior laws, for which the parameters depend on the temperature. For this application, the temperature loading is not accurately known and can reach values relatively close to the creep temperature: important nonlinear effects occur and the solution strongly depends on the used thermal loading. We consider a nonlinear reduced order model able to compute, in the exploitation phase, the behavior of the blade for a new temperature field loading. The sensitivity of the solution to the temperature makes {the classical unenriched proper orthogonal decomposition method} fail. In this work, we propose a new error indicator, quantifying the error made by the reduced order model in computational complexity independent of the size of the high-fidelity reference model. In our framework, when the {error indicator} becomes larger than a given tolerance, the reduced order model is updated using one time step solution of the high-fidelity reference model. The approach is illustrated on a series of academic test cases and applied on a setting of industrial complexity involving 5 million degrees of freedom, where the whole procedure is computed in parallel with distributed memory.

\section*{Keywords}
Nonlinear Reduced Order Model; Elastoviscoplastic behavior; Nonlinear structural mechanics; Proper Orthogonal Decomposition; Empirical Cubature Method; Error Indicator

\section{Introduction}
\label{sec:intro}

The application of interest for this work is the lifetime computation of aircraft engines' high-pressure turbine blades. Being located immediately downstream the combustion chamber, such parts undergo extreme thermal loading, with incoming fluid temperature higher than the material's melting temperature.  These blades are responsible for a large part of the maintenance budget of the engine, with temperature creep rupture and high-cycle fatigue~\cite{mazur2005474, cowles1996} as possible failure causes.
Various technological efforts have been spent to increase the durability of these blades as much as possible, such as  thermal barrier coatings~\cite{coatings}, advanced superalloys~\cite{superalloys} and complex internal cooling channels~\cite{coolingchannels1, coolingchannels2}, see Figure~\ref{fig:gaturbine} for a representation of a high-pressure turbine blade.
\begin{figure}[H]
  \centering
  \includegraphics[width=0.25\textwidth]{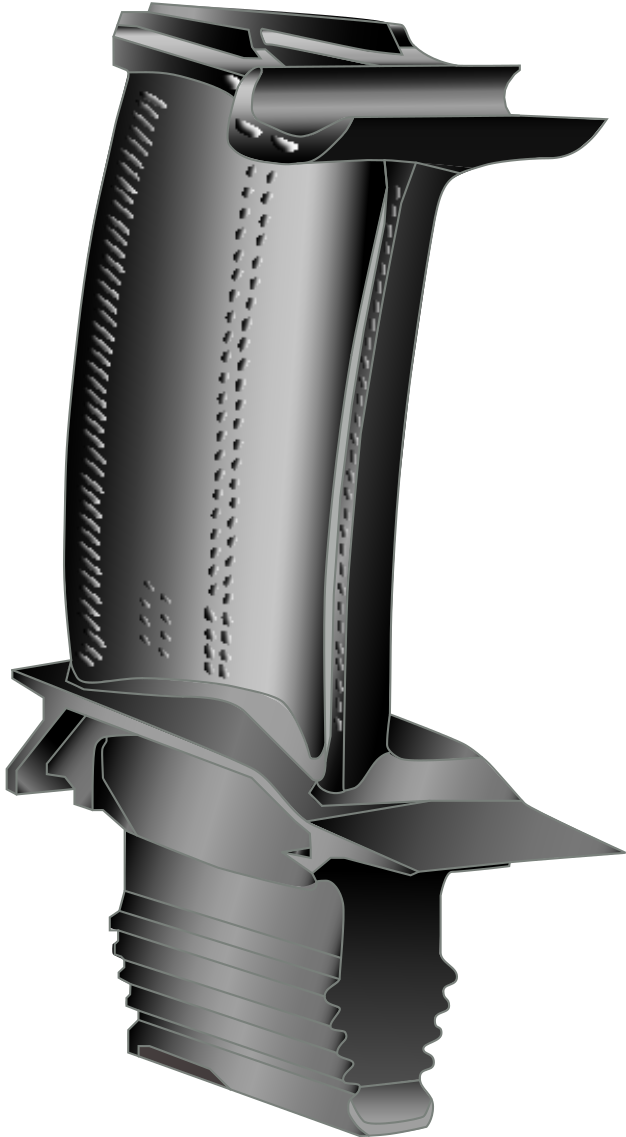}
  \caption{Illustration of a high-pressure turbine blade~\cite{gaturbine}. The internal channels create a protective layer of cool air to protect the outer surface of the blade.}
  \label{fig:gaturbine}
\end{figure}

Computing lifetime predictions for high-pressure turbine blades is a challenging task: meshes involve large numbers of degrees of freedom to account for local structures such as the internal cooling channels, the behavior laws are strongly nonlinear with many internal variables, and a large number of cycles has to be computed. Besides, the temperature loading is poorly known in the outlet section of the combustion chamber.
Our team has proposed in~\cite{ijnme} a nonintrusive reduced order model (ROM) strategy in parallel computation with distributed memory to mitigate the runtime issues: a domain decomposition method is used to compute the first cycle, and the reduced order model is used to speed up the computation of the following cycles, {which can be considered as a reduced order model-based temporal extrapolation}. As pointed out in~\cite{ijnme}, errors are accumulated during {this} temporal extrapolation. Moreover, quantifying the uncertainty on the lifetime with respect to some statistical description of the temperature loading using an already constructed reduced order model would introduce additional errors. In this context, error indicator-based enrichment of reduced order models is the topic of the present work.

Error estimation for reduced model predictions is a topic that receives interest in the scientific literature. The reduced basis method~\cite{maday2002priori, machiels2000output} for parametrized problems is a reduced order modeling method that intrinsically relies on efficient \textit{a posteriori} error bounds of the error between the reduced prediction and the reference high-fidelity (HF) solution. This method consists in a greedy enrichment of a current reduced order basis by the high-fidelity solution at the parametric value that maximizes the error bound on a rich sampling of the parametric space. Being intensively evaluated, the error bound must be computed in computational complexity independent of the number of degrees of freedom of the high-fidelity reference. Initially proposed for elliptic coercive partial differential equations~\cite{maday2002global}, where the error bound is the dual norm of the residual divided by a lower bound of the stability constant, the method has been adapted to problems of increased difficulty, with the derivation of certified error bounds for the Boussinesq equation~\cite{yano2014space}, the Burger's equation~\cite{OHLBERGER2013901}, the Navier-Stokes equations~\cite{manzoni2014efficient}. Numerical stability of such error estimations with respect to round-off error can be an issue in nonlinear problems, which was investigated in~\cite{casenave2012,casenave2014,buhr,chen2018robust}.

Even if it is not a requirement for their execution, error estimation is a desired feature for all the other reduced order modeling methods. In Proper Generalized Decomposition (PGD) methods~\cite{chinesta2013pgd}, error estimation based on the constitutive relation error method is available~\cite{ladeveze1999application, ladeveze2013toward, chamoin2017posteriori}. In Proper Orthogonal Decomposition (POD)-based reduced order modeling methods~\cite{POD1,POD2}, error estimators have been developed for linear-quadratic optimal control problems~\cite{troltzsch2009pod}, the approximation of mixte finite element problems~\cite{LUO20074184}, the optimal control of nonlinear parabolic partial differential equations~\cite{Kammann2012metho-18466}, and for the reduction of magnetostatic problems~\cite{henneron:hal-01107810} and Navier-Stokes equations~\cite{doi:10.1002/num.20393}. To reduce nonlinear problems, the POD has been coupled with reduced integration strategies called hyperreduction, for which error estimates in constitutive relation have been proposed~\cite{ryckelynck:hal-01237733,ryckelynck2013estimation}.
\textit{A priori} sensitivity studies for POD approximations of quasi-nonlinear parabolic equations are also available~\cite{Akkari2014}.

The contribution of this work consists in the construction of a new error indicator, adapted to the model order reduction of nonlinear structural mechanics, where we are interested in the prediction of the dual quantities such as the cumulated plasticity or the stress tensor. These dual quantities need a reconstruction step to be represented on the complete structure of interest, usually done using a Gappy-POD algorithm based on the reduced solution. We illustrate that the ROM-Gappy-POD residual of the quantities of interest is highly correlated to the error in our cases. From this observation, we propose a calibration step, based on the data computed during the \textit{offline} stage of the reduced order modeling, to construct an error indicator adapted to the considered problem and configuration. This error indicator is then used in enrichment strategies that improve the accuracy of the reduced order model prediction, when nonparametrized variations of the temperature field are considered in the \textit{online} stage.

The problem of interest, the evolution of an elastoviscoplastic body under a time-dependent loading, in presented in Section~\ref{sec:hfm}. Then, the \textit{\textit{a posteriori}} reduced order modeling of this problem is detailed in Section~\ref{sec:rom}. 
Section~\ref{sec:error_ind} presents the proposed error indicator, and the enrichment strategy based upon it. The performances of this error indicator and its ability to improve the quality of the reduced order model prediction via enrichment are illustrated in two numerical experiments involving elastoviscoplastic materials in Section~\ref{sec:num}.
Finally, conclusions and prospects are given in Section~\ref{sec:conclusion}.

\section{High-fidelity elastoviscoplastic model}
\label{sec:hfm}

We consider the model introduced in~\cite{ijnme}, which we briefly recall below for the sake of completeness. The structure of interest is noted $\Omega$ and its boundary $\partial\Omega$, where $\partial\Omega=\partial\Omega_D\cup\partial\Omega_N$ such that $\partial\Omega_D\cap\partial\Omega_N=\emptyset$, see Figure~\ref{fig:structure}.
\begin{center}
\def\svgwidth{0.4\textwidth}
\begin{figure}[H]
  \centering
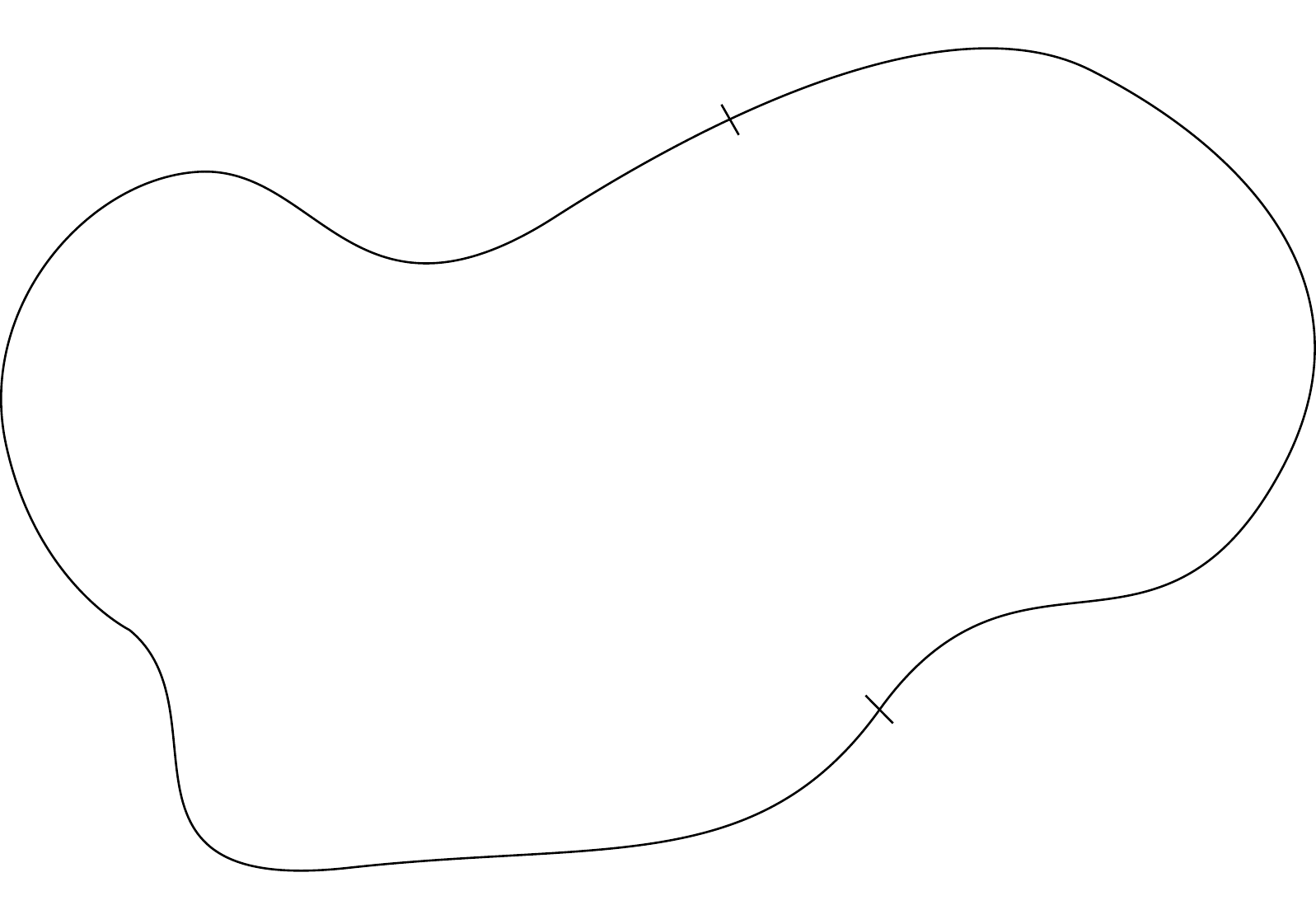
  \caption{Schematics of the considered structure $\Omega$.}
\label{fig:structure}
\end{figure}
\end{center}
Prescribed zero displacement are imposed on $\partial\Omega_D$, prescribed tractions $T_N$ are imposed on $\partial\Omega_N$ and volumic forces are imposed to the structure $\Omega$, in the form of a time-dependent loading. Assuming small deformations, the evolution of the structure $\Omega$ is governed by equations
\begin{subequations}
\begin{align}
&\epsilon(u)=\frac{1}{2}\left(\nabla u+ \nabla^T u\right)&\rm{in~}\textnormal{$\Omega$}\times[0,T]&\quad&\rm{(compatibility),}\label{eq:r1}\\
&{\rm div}\left(\sigma\right)+f=0&\rm{in~}\textnormal{$\Omega$}\times[0,T]&\quad&\rm{(equilibrium),}\label{eq:r2}\\
&\sigma=\sigma(\epsilon(u),y)&\rm{in~}\textnormal{$\Omega$}\times[0,T]&\quad&\rm{(behavior~law),}\label{eq:r3}\\
&u=0&\rm{in~}\partial\textnormal{$\Omega$}_D\times[0,T]&\quad&\rm{(prescribed~zero~displacement),}\label{eq:r4}\\
&\sigma\cdot n=T_N&\rm{in~}\partial\textnormal{$\Omega$}_N\times[0,T]&\quad&\rm{(prescribed~traction),}\label{eq:r5}\\
&u=0,y=0&\rm{in~}\textnormal{$\Omega$}~{\rm at~t=0}&\quad&\rm{(initial~condition),}\label{eq:r6}
\end{align}
\end{subequations}
where $\sigma$ is the Cauchy stress tensor, $\epsilon$ is the linear strain tensor, $n$ is the exterior normal on $\partial\Omega$, $y$ denotes the internal variables of the behavior law, and $u$ is the displacement solution.

Consider $H^1_0(\Omega)=\{v\in L^2(\Omega)|~\frac{\partial v}{\partial x_i}\in L^2(\Omega),~1\leq i\leq 3{\rm~and~}v|_{\partial\Omega_D}=0\}$. We introduce a finite element basis $\{\varphi_i\}_{1\leq i\leq N}$, such that $\mathcal{V}:={\rm Span}\left(\varphi_i\right)_{1\leq i\leq N}$ is a conforming approximation of $\left[H^1_0(\Omega)\right]^3$. {In what follows, bold symbols are used to refer to vectors.}
Using the Galerkin method, problem~\eqref{eq:r1}-\eqref{eq:r6} leads to a system of nonlinear equations, numerically solved using the following Newton algorithm:
\begin{equation}
\label{eq:Newton}
\displaystyle\frac{D\mathcal{F}}{Du}\left(u^k\right)\left(\bold{u}^{k+1}-\bold{u}^{k}\right)=-\boldsymbolcal{F}\left(u^k\right),
\end{equation}
where {$u^k\in\mathcal{V}$ is the k-th iteration of the discretized displacement field at the considered time-step and $\bold{u}^k=\left(u_i^k\right)_{1\leq i\leq N}\in\mathbb{R}^N$ is such that $\displaystyle u^k=\sum_{i=1}^Nu^k_i\varphi_i$},
\begin{equation}
\label{eq:TgtMat}
\displaystyle{\frac{D\mathcal{F}}{Du}\left({u}^k\right)}_{ij}=\int_{\Omega}\epsilon\left(\varphi_j\right):\mathcal{K}\left(\epsilon(u^k),y\right):\epsilon\left(\varphi_i\right), 1\leq i,j\leq N,
\end{equation}
where $\mathcal{K}\left(\epsilon(u^k),y\right)$ is the local tangent operator, and
\begin{equation}
\label{eq:problem}
\displaystyle {\mathcal{F}_i\left({u}^k\right)}=\int_\Omega\sigma\left(\epsilon({u}^k),y\right):\epsilon\left(\varphi_i\right)-\int_\Omega f\cdot\varphi_i-\int_{\partial\Omega_N}T_N\cdot\varphi_i, 1\leq i\leq N.
\end{equation}
The Newton algorithm stops when the norm of the residual divided by the norm of the external forces vector is smaller than a user-provided tolerance, denoted $\epsilon^{\rm HFM}_{\rm Newton}$.

In Equation~\eqref{eq:Newton}, $f$, $T_N$, $u^k$ and $y$ from~\eqref{eq:problem} are known quantities and contain the time-dependency of the solution. Notice that the computation of the functions $\displaystyle\left({u}^k,y\right)\mapsto \sigma\left(\epsilon({u}^k),y\right)$ and $\displaystyle\left({u}^k,y\right)\mapsto \mathcal{K}\left(\epsilon(u^k),y\right)$ requires solving ordinary differential equations, whose complexity depends on the behavior law modeling the considered material.

In our application, the quantities of interest are not the displacement fields $u$, but rather the dual quantities stress tensor field $\sigma$ and cumulated plasticity field, denoted $p$. The finite element software used to generate the high-fidelity solutions $u$ is Zebulon, which contains a Domain Decomposition solver able to solve large scale problems, and the {behavior} laws are computed using Z-mat; both solvers belong to the Z-set suite~\cite{zset}.

\section{Reduced Order Modeling}
\label{sec:rom}

Reduced order modeling techniques are usually decomposed in two stages: the \textit{offline} stage, where information from the high-fidelity model (HFM) is learned, and the \textit{online} stage, where the reduced order model is constructed and exploited. In the \textit{offline} stage occur computationally demanding tasks, whereas the \textit{online} stage is required to be efficient, in the sense that only operations in computational complexity independent of the number $N$ of degrees of freedom of the high-fidelity model are allowed.

In what follows, we consider \textit{\textit{a posteriori}} reduced order modeling, which means that our reduced model involves an efficient Galerkin method no longer written in the finite element basis $(\varphi_i)_{1\leq i\leq N}$, but on a reduced order basis  $(\psi_i)_{1\leq i\leq n}$, with $n\ll N$, adapted to the problem at hand.
To generate this basis, the high-fidelity problem~\eqref{eq:r1}-\eqref{eq:r6} is solved for given configurations. In the general case, the variations between the candidate configurations are quantified using a low-dimensional parametrization, leading to a parametrized reduced order model. 
In this work, we consider nonparametrized variations between the configurations of interest, which we call variability and denote $\mu$. The variability contains the time step, as well as a nonparametrized description of the configuration, which in our case is the loading referred as a label. For instance, $\mu=\{t=3, {\rm ``computation~1''}\}$, means that we consider the third time step of the configuration ``computation 1'', for which we have a description of the loading (center, axis and speed of rotation, temperature, and pressure fields in our applications).
{We denote $\mathcal{P}_{\rm off.}$ the set of variabilities encountered during the \textit{offline} stage.}

The reduced Newton algorithm reads
\begin{equation}
\label{eq:reducedNewton}
\displaystyle\frac{D\mathcal{F}_{\mu}}{Du}\left(\hat{u}^k_{\mu}\right)\left(\hat{\bold{u}}^{k+1}_{\mu}-\hat{\bold{u}}^{k}_{\mu}\right)=-\boldsymbolcal{F}_{\mu}\left(\hat{u}^k_{\mu}\right),
\end{equation}
where {$\hat{u}^k_{\mu}\in\hat{\mathcal{V}}:={\rm Span}\left(\psi_i\right)_{1\leq i\leq n}$ is the $k$-th iteration of the reduced displacement field for the considered time-step and $\hat{\bold{u}}^k_{\mu}=\left(\hat{u}^k_{\mu,i}\right)_{1\leq i\leq n}\in\mathbb{R}^n$ is such $\displaystyle\hat{u}^k_{\mu}=\sum_{i=1}^n\hat{u}^k_{\mu,i}\psi_i$},
\begin{equation}
\label{eq:redTgtMat}
\displaystyle{\frac{D\mathcal{F}_{\mu}}{Du}\left(\hat{u}^k_{\mu}\right)}_{ij}=\int_{\Omega}\epsilon\left(\psi_j\right):\mathcal{K}\left(\epsilon(\hat{u}^k_{\mu}),y_{\mu}\right):\epsilon\left(\psi_i\right), 1\leq i,j\leq n,
\end{equation}
and
\begin{equation}
\label{eq:redProblem}
\displaystyle {\mathcal{F}_{\mu,i}\left(\hat{u}^k\right)}=\int_\Omega\sigma\left(\epsilon(\hat{u}^k_{\mu}),y_{\mu}\right):\epsilon\left(\psi_i\right)-\int_\Omega f_{\mu}\cdot\psi_i-\int_{\partial\Omega_N}T_{N,\mu}\cdot\psi_i, 1\leq i\leq n.
\end{equation}
The reduced Newton algorithm stops when the norm of the reduced residual divided by the norm of the reduced external forces vector is smaller than a user-provided tolerance, denoted $\epsilon^{\rm ROM}_{\rm Newton}$. In~\eqref{eq:reducedNewton}-\eqref{eq:redProblem}, the \textit{online} variability $\mu$ consists in the considered time step, the pressure field $T_{N,\mu}$, the centrifugal effects $ f_{\mu}$, and the temperature field in the internal variables $y_{\mu}$.

Ensuring the efficiency of~\eqref{eq:reducedNewton} can be a complicated task, in particular for nonlinear problems, that requires methodologies recently proposed in the literature. For instance, the integrals in~\eqref{eq:redTgtMat} and~\eqref{eq:redProblem} are computed in computational complexity dependent on $N$ in the general case.
We briefly present the choices made in our previous work~\cite{ijnme}: the 
\textit{offline} stage is composed of the following steps
\begin{itemize}
\item data generation: this corresponds to the generation of the numerical approximation of the solutions to~\eqref{eq:r1}-\eqref{eq:r6}, using the Newton algorithm~\eqref{eq:Newton}. Multiple temporal solutions can be considered, for different loading conditions. The set of theses solutions $\{u_{\mu_i}\}_{1\leq i\leq N_c}$ is called the snapshots set.

\item data compression: this corresponds to the generation of the reduced order basis, usually obtained by looking for a hidden low-rank structure of the snapshots set. In this work, we consider the snapshot POD, see Algorithm~\ref{algo:snapshotPOD} and~\cite{POD1, POD2}.

\begin{algorithm}[H]
 \KwIn{tolerance $\epsilon_{\rm POD}$, snapshots set $\{u_{\mu_i}\}_{1\leq i\leq N_c}$}
 \KwOut{reduced order basis $\{\psi_i\}_{1\leq i\leq n}$}
 Compute the correlation matrix $\displaystyle C_{i,j}=\int_{\Omega}u_{\mu_i}\cdot u_{\mu_j}$, $1\leq i,j\leq N_c$
 
 Compute the $n$ largest eigenvalues $\lambda_i$, $1\leq i\leq n$, and associated orthonormal eigenvectors $\xi_i$, $1\leq i\leq n$, of $C$ such that  $n = \max\left(n_1,n_2\right)$, where $n_1$ and $n_2$ are respectively the smallest integers such that $\displaystyle\sum_{i=1}^{n_1} \lambda_i \geq \left(1-\epsilon_{\rm POD}^2\right)\sum_{i=1}^{N_c} \lambda_i$ and $\lambda_{n_2}\leq \epsilon_{\rm POD}^2 \lambda_{0}$

 Compute the reduced order basis $\displaystyle\psi_i(x)=\frac{1}{\sqrt{\lambda_i N_c}}\sum_{j=1}^{N_c}u_{\mu_j}(x)\xi_{i,j}$, $1\leq i\leq n$
 \caption{\label{algo:snapshotPOD}Data compression by snapshot POD.}
\end{algorithm}

\item operator compression: this step enables the efficient construction of~\eqref{eq:reducedNewton}, usually by replacing the computationally demanding integral evaluations by adapted approximation evaluated in computational complexity independent of $N$. In this work, we consider the Empirical Cubature Method (ECM, see~\cite{ECM}), a method close to the Energy Conserving Sampling and Weighting (ECSW, see~\cite{ECSW1, ECSW2, ECSW3}) proposed earlier. Consider the vector of reduced internal forces appearing in~\eqref{eq:redProblem}:
\begin{equation}
\label{eq:intForcesQuadrature}
\hat{F}^{\rm int}_{\mu,i}:=\int_\Omega\sigma\left(\epsilon(\hat{u}_{\mu}),y_{\mu}\right)(x):\epsilon\left(\psi_i\right)(x)dx\approx\sum_{e\in E}\sum_{k=1}^{n_e}\omega_k\sigma\left(\epsilon(\hat{u}_{\mu}),y_{\mu}\right)(x_k):\epsilon\left(\psi_i\right)(x_{k}), 1\leq i\leq n,
\end{equation}
where the right-hand side is the high-fidelity quadrature formula used for numerical evaluation. In~\eqref{eq:intForcesQuadrature}, the stress tensor $\sigma\left(\epsilon(\hat{u}_{\mu}),y_{\mu}\right)$ for the considered reduced solution $\hat{u}_{\mu}$ at variability $\mu$ and internal variables $y_{\mu}$ is seen as a function of space, and $E$ denotes the set of elements of the mesh, $n_e$ denotes the number of integration points for the element $e$, $\omega_k$ and $x_k$ are the integration weights and points of the considered element. 
The ECM consists in replacing this high-fidelity quadrature~\eqref{eq:intForcesQuadrature} by an approximation adapted to the snapshots $\{u_{\mu_i}\}_{1\leq i\leq N_c}$ and the reduced order basis $\{\psi_i\}_{1\leq i\leq n}$, and involving a small number of integration points:
\begin{equation}
\label{eq:intForcesQuadrature2}
\hat{F}^{\rm int}_{\mu,i}(t)\approx\sum_{k'=1}^{d}\hat{\omega}_{k'}\sigma\left(\epsilon(\hat{u}_{\mu}),y_{\mu}\right)(\hat{x}_{k'}):\epsilon\left(\psi_i\right)(\hat{x}_{k'}), 1\leq i\leq n,
\end{equation}
where $\displaystyle d\ll \sum_{e\in E}n_e$, the reduced integration points $\hat{x}_{k'}$, $1\leq k'\leq d$, are taken among the integration points of the high-fidelity quadrature~\eqref{eq:intForcesQuadrature} and the reduced integration weights $\hat{\omega}_{k'}$ are positive. 

We now briefly present how this reduced quadrature formula is obtained and we refer to~\cite{ijnme, ECM} for more details. We denote ${h}_q:=\sigma\left(\epsilon({u_{\mu_{(q//n)+1}}}),y\right):\epsilon\left(\psi_{(q\%n)+1}\right){\in L^2(\Omega)}$, where $//$  and $\%$ are respectively the quotient and the remainder of the Euclidean division, $\mathcal{Z}$ is a subset of $[1;N_G]$ of size $d$, {with $N_G$ the number of integration points},  and $J_{\mathcal{Z}}\in\mathbb{R}^{nN_c\times d}$ and ${\boldsymbol{g}}\in\mathbb{N}^{nN_c}$ are such that for all $1\leq q\leq nN_c$ and all $1\leq {k'}\leq d$, 
\begin{equation}
J_{\mathcal{Z}} = \Bigg({h}_q(x_{{\mathcal{Z}_{k'}}})\Bigg)_{1\leq q\leq nN_c,~{1\leq k'\leq d}},\qquad \boldsymbol{g} = \left(\int_{\Omega}{h}_q\right)_{1\leq q\leq nN_c},
\end{equation}
{where $\mathcal{Z}_{k'}$ denotes the $k'$-th element of  $\mathcal{Z}$ and where we recall that $n$ is the number of snapshot POD modes. Let $\hat{\boldsymbol{\omega}}\in{\mathbb{R}^{+}}^d$. From the introduced notation, $\displaystyle\left(J_{\mathcal{Z}}\hat{\boldsymbol{\omega}}\right)_q=\sum_{k'=1}^{d}\hat{\omega}_{k'}\sigma\left(\epsilon(u_{\mu_{(q//n)+1}}),y\right)(x_{\mathcal{Z}_{k'}}):\epsilon\left(\psi_{(q\%n)+1}\right)(x_{\mathcal{Z}_{k'}})$, $1\leq q\leq nN_c$, which is a candidate approximation for $\displaystyle \int_{\Omega}\sigma\left(\epsilon(u_{\mu_{(q//n)+1}}),y\right):\epsilon\left(\psi_{(q\%n)+1}\right) = g_q$, $1\leq q\leq nN_c$.}
The best reduced quadrature formula of length $d$ for the reduced internal forces vector is obtained as (c.f.~\cite[Equation (23)]{ECM})
\begin{equation}
\label{eq:optPb}
\left(\hat{\boldsymbol{\omega}},\mathcal{Z}\right)=\arg\underset{\hat{\boldsymbol{\omega}}'>0,\mathcal{Z}'\subset[1;N_G]}{\min}\left\|J_{\mathcal{Z}'}\hat{\boldsymbol{\omega}}'-\boldsymbol{g}\right\|_{{2}},
\end{equation}
where $\left\|\cdot\right\|_{{2}}$ stands for the Euclidean norm. Taking the length of the reduced quadrature formula in the objective function yields a NP-hard optimization problem, see~\cite[Section 5.3]{ECSW1}, citing~\cite{amaldi}. 
To produce a reduced quadrature formula in a controlled return time, we consider a Nonnegative Orthogonal Matching Pursuit algorithm, see~\cite[Algorithm 1]{fastNNOMP} and Algorithm~\ref{algo:NNOMP} below, a variant of the Matching Pursuit algorithm~\cite{matchingpursuit} tailored to the nonnegative requirement.

\begin{algorithm}[H]
 \KwIn{$J$, $b$, tolerance $\epsilon_{\rm Op. comp.}$}
 \KwOut{$\hat{\omega}_k$, $\hat{x}_k$, $1\leq k\leq d$}
 \textbf{Initialization:} $\mathcal{Z}=\emptyset$, $k'=0$, $\hat{\boldsymbol{\omega}}=0$ and $\boldsymbol{r_0}=\boldsymbol{g}$
 \While{$\left\|\boldsymbol{r_{k'}}\right\|_2>\epsilon\left\|\boldsymbol{g}\right\|_2$}{
  $\mathcal{Z}\leftarrow \mathcal{Z}\cup {\rm max~index} \left(J_{[1:N_G]}^T \boldsymbol{r_{k'}}\right)$
  
  $\hat{\boldsymbol{\omega}}\leftarrow \underset{\hat{\boldsymbol{\omega}}'>0}{\arg}\min\left\|\boldsymbol{g}-J_\mathcal{Z}\hat{\boldsymbol{\omega}}'\right\|_2^2$
  
  $\boldsymbol{r_{{k'}+1}}\leftarrow \boldsymbol{g}-J_\mathcal{Z} \hat{\boldsymbol{\omega}}$
  
 ${k'}\leftarrow {k'}+1$
  }
  $d\leftarrow {k'}$
  
  $\hat{x}_k:=x_{\mathcal{Z}_k}$, $1\leq k\leq d$
 \caption{\label{algo:NNOMP}Nonnegative Orthogonal Matching Pursuit.}
\end{algorithm}

A reduced quadrature is also used to accelerate the integral computation in~\eqref{eq:redTgtMat}. The remaining integral computations in~\eqref{eq:reducedNewton} are $\displaystyle\int_\Omega f_{\mu}\cdot\psi_i$ and $\displaystyle\int_{\partial\Omega_N}T_{N,\mu}\cdot\psi_i$. They do not depend on the current solution, but only on the loading of the \textit{online} variability $\mu$, which is no longer efficient for nonparametrized variabilities. However, in our context of large scale nonlinear mechanics, these integrals are computed very fast with respect to the ones requiring behavior law resolutions, see Remark~\ref{rq:efficiency}.
\end{itemize}

For the primal quantity displacement $u$, we can identify the solution of the reduced problem $\hat{\bold{u}}^k_{\mu}\in\mathbb{R}^n$ with the reconstruction on the complete domain $\Omega$: $\displaystyle\hat{u}^k_{\mu} = \sum_{i=1}^n\hat{u}^k_{\mu,i}\psi_i$. For the dual quantities, such identification does not exist. However, the behavior law has already been evaluated at the integration point of the reduced quadrature $\hat{x}_k$, $1\leq k\leq d$. Since the evaluations are computed during the resolution of the reduced problem, we denote them by hats: for instance for the cumulated plasticity, $\hat{\boldsymbol{p}}_{\mu}\in\mathbb{R}^{d}$ is such that $\hat{p}_{\mu,k}$ is computed by the \textit{online} evaluation of the behavior law solver at the reduced integration points $\hat{x}_k$, $1\leq k\leq d$. To recover the cumulated plasticity on the complete structure $\Omega$, a ROM-Gappy-POD procedure is used to reconstruct the fields on the complete domain, see Algorithms~\ref{algo:GappyPOD1}-\ref{algo:GappyPOD2} and~\cite{Everson95} for the original presentation of the Gappy-POD. In step 2 of Algorithm~\ref{algo:GappyPOD1}, EIM denotes the Empirical Interpolation method~\cite{EIM1,EIM2} and the set of integration point whose indices have been selected is still denoted $\{\hat{x}_k\}_{1\leq k\leq m^{p}}$, where $n^{p}\leq m^{p}\leq n^{p}+d$.
The dual quantities predicted by the reduced order model and reconstructed on the complete structure are denoted with tildes, for instance $\tilde{p}_{\mu}$ for the cumulated plasticity.

\begin{algorithm}[H]
 \KwIn{tolerance $\epsilon_{\rm Gappy-POD}$, cumulated plasticity snapshots set $\{p_{\mu_i}\}_{1\leq i\leq N_c}$, indices of the integration points of the reduced quadrature formula}
 \KwOut{indices for \textit{online} material law computation, ROM-Gappy-POD matrix}
Apply the snapshot POD (Algorithm~\ref{algo:snapshotPOD}) on the high-fidelity snapshots  $\{p_{\mu_i}\}_{1\leq i\leq N_c}$ to obtain the vectors $\psi^{p}_i$, $1\leq i\leq n^{p}$, orthonormal {with respect to the $L^2(\Omega)$-inner product}

{Apply the EIM to the collection of vectors $\psi^{p}_i$, $1\leq i\leq n^{p}$, to select $n^{p}$ distinct indices and complete (without repeat) this set of indices by the indices of the integration points of the reduced quadrature formula}

{Construct the matrix $M\in\mathbb{N}^{n^{p}\times n^{p}}$ such that $M_{i,j}=\sum_{k=1}^{m^{p}}\psi^{p}_i(\hat{x}_{k})\psi^{p}_j(\hat{x}_{k})$ (Gappy scalar product of the POD modes)}
 \caption{\label{algo:GappyPOD1}Dual quantity reconstruction of the cumulated plasticity~$p$: \textit{offline} stage of the ROM-Gappy-POD.}
\end{algorithm}

\begin{algorithm}[H]
 \KwIn{\textit{online} variability $\mu$, indices for \textit{online} material law computation, ROM-Gappy-POD matrix}
 \KwOut{reconstructed value for $p$ on the complete domain $\Omega$}
{Construct $\boldsymbol{b}_{\mu}\in\mathbb{R}^{n^{p}}$, where $b_{\mu,i}=\sum_{k=1}^{m^p}\psi^p_i(\hat{x}_{k})\hat{p}_{\mu,k}$, and $\hat{\boldsymbol{p}}_{\mu}\in\mathbb{R}^{m^p}$ is such that $\hat{p}_{\mu,k}$ is the \textit{online} prediction of $p$ at variability $\mu$ and integration point $\hat{x}_{k}$ (from the \textit{online} evaluation of the behavior law solver)}

{Solve the (small) linear system: $M\boldsymbol{z}_{\mu}=\boldsymbol{b}_{\mu}$}

{Compute the reconstructed value for $p$ on the complete subdomain $\Omega$ as $\tilde{p}_{\mu}:=\sum_{i=1}^{n^{p}} z_{\mu,i}\psi^{p}_i$}
 \caption{\label{algo:GappyPOD2}Dual quantity reconstruction of the cumulated plasticity~$p$: \textit{online} stage of the ROM-Gappy-POD.}
\end{algorithm}
The ROM-Gappy-POD reconstruction is well-posed, since the linear system considered in the \textit{online} stage of Algorithm~\ref{algo:GappyPOD2} is invertible, see~\cite[Proposition 1]{ijnme}.

An interesting feature of our framework is the ability to be used in sequential or in parallel with distributed memory. Independently of the high-fidelity solver, the solutions can be partitioned between some subdomains and the reduced order framework can treat the data in parallel. The MPI communications are limited to the computation of the scalar products in line 1 of Algorithm~\ref{algo:snapshotPOD} for the \textit{offline} stage, and the scalar products in~\eqref{eq:redTgtMat} and~\eqref{eq:redProblem} in the \textit{online} stage. Furthermore, these scalar products are well adapted to parallel processing: each process computes independently its contribution on its respective subdomain, and the interprocess communication is limited to an all-to-all transfer of a scalar. All the remaining operations in our framework are treated in parallel with no communication, in particular in the operator compression step, reduced quadrature formulae are constructed independently.
A natural use for the parallel framework is in coherence with Domain Decomposition solvers (potentially from commercial codes), which conveniently produce solutions partitioned in subdomains. Actually in our framework, the three steps of the \textit{offline} stage (data generation, data compression and operator compression), the \textit{online} stage, the post-treatment and the visualization are all treated in parallel with distributed memory, see~\cite{ijnme} for more details.

\section{A heuristic error indicator}
\label{sec:error_ind}

We look for an efficient error indicator in this context of general nonlinearities and nonparametrized variabilities.
In model order reduction techniques, error estimation is an important feature, that becomes interesting under the condition that it can be computed in complexity independent of the number of degrees of freedom $N$ of the high-fidelity model.

\subsection{{First results on errors and residuals}}

{We recall some notations introduced so far: bold symbols refer to vectors ($\boldsymbol{p}_\mu$ is the vector of components the value of the HF cumulated plasiticity field at reduced integration points), hats refer to quantities computed by the reduced order model ($\hat{u}_\mu$ is the reduced displacement and $\hat{\boldsymbol{p}}_\mu$ is the vector of components the value of the reduced cumulated plasticity at the reduced quadrature points), whereas tildes refer to dual quantities reconstructed by Gappy-POD (for instance $\tilde{p}$). Bold and tilde symbols, for instance $\tilde{\boldsymbol{p}}_\mu$, refer to the vectors of components the reconstructed dual quantities on the reduced integration points: $\tilde{p}_{\mu,k}=\tilde{p}_{\mu}(\hat{x}_{k})$, $1\leq k\leq m^p$. Notice that in the general case, $\tilde{\boldsymbol{p}}_\mu \neq \hat{\boldsymbol{p}}_\mu$: this discrepancy is at the base of our proposed error indicator. A table of notations is provided at the end of the document.}

A quantification for the prediction relative error is defined as 
\begin{equation}
\label{eq:error_1}
E^p_{\mu}:=
\left\lbrace
\begin{array}{ll}
\frac{\|p_{\mu}-\tilde{p}_{\mu}\|_{L^2(\Omega)}}{\|p_{\mu}\|_{L^2(\Omega)}}  & \mbox{if} \|p_{\mu}\|_{L^2(\Omega)}\neq 0\\
\frac{\|p_{\mu}-\tilde{p}_{\mu}\|_{L^2(\Omega)}}{{\underset{\mu\in\mathcal{P}_{\rm off.}}{\max}\|p_{\mu}\|_{L^2(\Omega)}}} & \mbox{otherwise,}\\
\end{array}\right.
\end{equation}
where we recall that $p_{\mu}$ and $\tilde{p}_{\mu}$ are respectively the high-fidelity and reduced predictions for the cumulated plasticity field at the variability $\mu$, {and $\mathcal{P}_{\rm off.}$ is the set of variabilities encountered during the \textit{offline} stage}.

Define the ROM-Gappy-POD residual as
\begin{equation}
\label{eq:error_ind_1}
\mathcal{E}^p_{\mu}:=
\left\lbrace
\begin{array}{ll}
\frac{\|\tilde{\boldsymbol{p}}_{\mu}-\hat{\boldsymbol{p}}_{\mu}\|_2}{\|\hat{\boldsymbol{p}}_{\mu}\|_2}  & \mbox{if} \|\hat{\boldsymbol{p}}_{\mu}\|_2\neq 0\\
\frac{\|\tilde{\boldsymbol{p}}_{\mu}-\hat{\boldsymbol{p}}_{\mu}\|_2}{{\underset{\mu\in\mathcal{P}_{\rm off.}}{\max}\|\hat{\boldsymbol{p}}_{\mu}\|_2}} & \mbox{otherwise,}\\
\end{array}\right.
\end{equation}
where $\|\cdot\|_2$ denotes the Euclidean norm. {Notice that the relative error $E^p_{\mu}$ involves fields and $L^2$-norms whereas the ROM-Gappy-POD residual $\mathcal{E}^p_{\mu}$ involves vectors of dual quantities in the set of reduced integration points and Euclidean norms}. In~\eqref{eq:error_ind_1}, $\|\tilde{\boldsymbol{p}}_{\mu}-\hat{\boldsymbol{p}}_{\mu}\|_2$ is the error between the \textit{online} evaluation of the cumulated plasticity by the behavior law solver: {$\hat{\boldsymbol{p}}_{\mu}$}, and the reconstructed prediction at the reduced integration points $\hat{x}_{k}$: {$\tilde{\boldsymbol{p}}_{\mu}$}, $1\leq k\leq m^p$.
Let $B\in\mathbb{R}^{m^p\times n^p}$ such that $B_{k,i}=\psi^{p}_i(\hat{x}_{k})$, $1\leq k\leq m^p$, $1\leq i\leq n^p$; by definition, {$\displaystyle\tilde{p}_{\mu,k}=\sum_{i=1}^{n^{p}} z_{\mu,i}\psi^{p}_i({\hat{x}_k})=\left(B\boldsymbol{z}_{\mu}\right)_k$, $1\leq k\leq m^p$}.  From Algorithm~\ref{algo:GappyPOD1}, $M=B^TB$ and from Algorithm~\ref{algo:GappyPOD2}, $\boldsymbol{b}_{\mu} = B^T\hat{\boldsymbol{p}}_{\mu}$, so that $\boldsymbol{z}_{\mu} = \left(B^TB\right)^{-1}B^T\hat{\boldsymbol{p}}_{\mu}$, which is the solution of the following unconstrained least-square optimization: $\boldsymbol{z}_{\mu}:=\underset{\boldsymbol{z}'\in\mathbb{R}^{n}}{\rm arg}{\rm min}\|B\boldsymbol{z}'-\hat{\boldsymbol{p}}_{\mu}\|_2^2$. Hence, {in~\eqref{eq:error_ind_1}}, $\|\tilde{\boldsymbol{p}}_{\mu}-\hat{\boldsymbol{p}}_{\mu}\|_2$ is the norm of the residual of the considered least-square optimization.

Suppose $K:=\{p_\mu,{\rm~for~all~possible~variabilities~}\mu\}$ is a compact subset of $L^2(\Omega)$ and define {the Kolmogorov $n$-width by $d_{n}(K)_{L^2(\Omega)}:=\underset{{\rm dim}(W)=n}{\inf}~d(K,W)_{L^2(\Omega)}$, where $d(K,W)_{L^2(\Omega)}:=\underset{v\in K}{\sup}~\underset{w\in W}{\inf}~\|v-w\|_{L^2(\Omega)}$, with $W$ a finite-dimensional subspace of $L^2(\Omega)$. The Kolmogorov $n$-width is an object from approximation theory; a presentation and discussion in a reduced order modeling context can be found in~\cite{madaykolmogorov}.} 
Denote also $\boldsymbol{\Pi}_\mu:=\left(\left(p_\mu,\psi^{p}_i\right)_{L^2(\Omega)}\right)_{1\leq i\leq n^p}\in\mathbb{R}^{n^p}$, where we recall that $\left\{\psi^{p}_i\right\}_{1\leq i\leq n^p}$ are the Gappy-POD modes obtained by Algorithm~\ref{algo:GappyPOD1} {and where $\left(\cdot,\cdot\right)_{L^2(\Omega)}$ denotes the $L^2(\Omega)$ inner-product}. 
{All the dual quantities being computed by the high-fidelity solver at the $N_G$ integration points, they have finite values at these points.
Unlike the primal displacement field, the dual quantity are not directly expressed in a finite element basis, but through their values on the integration points. For pratical manipulations, we express the dual quantity fields as a constant on each polyhedron obtained as a Voronoi diagram in each element of the mesh, with seeds the integration points; the constants corresponding to the value of the dual quantity on the corresponding integration point.
}

{We first control the numerator in the relative error $E^p_\mu$ with respect to the numerator in the ROM-Gappy-POD residual $\mathcal{E}^p_\mu$ in Proposition~\ref{estimation}.}

\begin{proposition}
\label{estimation}
There exist two positive constants $C_1$ and $C_2$ independent of $\mu$ (but dependent on $n^p$) such that
\begin{equation}
\label{eq:estimation}
\left\|p_{{\mu}}-\tilde{p}_\mu\right\|^2_{L^2(\Omega)}\leq  C_1\|B\boldsymbol{z}_\mu-\hat{\boldsymbol{p}}_{\mu}\|_2^2 + C_1 \|\boldsymbol{p}_{\mu}-\hat{\boldsymbol{p}}_{\mu}\|_2^2 + C_2 d(K, {\rm Span}\{\psi_i^p\}_{1\leq i\leq n^p})_{L^2(\Omega)}^2.
\end{equation}
\end{proposition}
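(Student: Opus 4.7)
The plan is to split $p_\mu - \tilde{p}_\mu$ along the $L^2(\Omega)$-orthogonal projection $\pi_W$ onto $W := \mathrm{Span}\{\psi_i^p\}_{1\le i\le n^p}$. Since $p_\mu - \pi_W p_\mu \perp W$ and $\pi_W p_\mu - \tilde{p}_\mu \in W$, Pythagoras turns the squared error into
\begin{equation*}
\|p_\mu - \tilde{p}_\mu\|_{L^2(\Omega)}^2 = \|p_\mu - \pi_W p_\mu\|_{L^2(\Omega)}^2 + \|\pi_W p_\mu - \tilde{p}_\mu\|_{L^2(\Omega)}^2,
\end{equation*}
where the first summand is bounded directly by $d(K,W)_{L^2(\Omega)}^2$ because $p_\mu \in K$. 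For the second summand, I would use the $L^2$-orthonormality of $(\psi_i^p)$ to identify $\pi_W p_\mu$ and $\tilde{p}_\mu$ with their coordinate vectors $\boldsymbol{\Pi}_\mu$ and $\boldsymbol{z}_\mu$ in $\mathbb{R}^{n^p}$, so the task reduces to bounding $\|\boldsymbol{\Pi}_\mu - \boldsymbol{z}_\mu\|_2^2$ by the three quantities on the right-hand side of~\eqref{eq:estimation}.

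Next, I would use that $M = B^T B$ is invertible (well-posedness of the Gappy-POD, cited above from~\cite[Proposition~1]{ijnme}), so $B \in \mathbb{R}^{m^p\times n^p}$ has full column rank and a smallest singular value $\sigma_{\min}(B) > 0$ satisfying $\|B\boldsymbol{v}\|_2 \ge \sigma_{\min}(B)\,\|\boldsymbol{v}\|_2$ for every $\boldsymbol{v} \in \mathbb{R}^{n^p}$. Applied to $\boldsymbol{v} = \boldsymbol{\Pi}_\mu - \boldsymbol{z}_\mu$ and combined with the telescoping $B\boldsymbol{\Pi}_\mu - B\boldsymbol{z}_\mu = (B\boldsymbol{\Pi}_\mu - \boldsymbol{p}_\mu) + (\boldsymbol{p}_\mu - \hat{\boldsymbol{p}}_\mu) + (\hat{\boldsymbol{p}}_\mu - B\boldsymbol{z}_\mu)$ and the triangle inequality, this yields
\begin{equation*}
\|\boldsymbol{\Pi}_\mu - \boldsymbol{z}_\mu\|_2 \le \frac{1}{\sigma_{\min}(B)}\Big(\|B\boldsymbol{\Pi}_\mu - \boldsymbol{p}_\mu\|_2 + \|\boldsymbol{p}_\mu - \hat{\boldsymbol{p}}_\mu\|_2 + \|B\boldsymbol{z}_\mu - \hat{\boldsymbol{p}}_\mu\|_2\Big).
\end{equation*}
The hard part should be precisely this step: the Gappy residual $\|B\boldsymbol{z}_\mu - \hat{\boldsymbol{p}}_\mu\|_2$ does not surface from the $L^2$-orthogonal decomposition alone; it only appears by pushing forward through $B$ and exploiting its left-invertibility.

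It then remains to control $\|B\boldsymbol{\Pi}_\mu - \boldsymbol{p}_\mu\|_2$ by $d(K,W)_{L^2(\Omega)}$. The Voronoi piecewise-constant convention recalled in the excerpt implies that each snapshot $p_{\mu_i}$, hence each POD mode $\psi_i^p$ and every element of $W$, is constant on each Voronoi cell; in particular $\pi_W p_\mu - p_\mu$ is piecewise constant on these cells. Denoting $V_{\min}$ the smallest Voronoi cell volume, this produces the pointwise-to-integral estimate
\begin{equation*}
\|B\boldsymbol{\Pi}_\mu - \boldsymbol{p}_\mu\|_2^2 = \sum_{k=1}^{m^p}\bigl((\pi_W p_\mu - p_\mu)(\hat{x}_k)\bigr)^2 \le \frac{1}{V_{\min}}\|\pi_W p_\mu - p_\mu\|_{L^2(\Omega)}^2 \le \frac{d(K,W)_{L^2(\Omega)}^2}{V_{\min}}.
\end{equation*}
Squaring the previous inequality via $(a+b+c)^2 \le 3(a^2+b^2+c^2)$ and substituting everything back into the Pythagoras identity produces~\eqref{eq:estimation} with $C_1 = 3/\sigma_{\min}(B)^2$ and $C_2 = 1 + 3/(\sigma_{\min}(B)^2 V_{\min})$, both independent of $\mu$ but depending on $n^p$ through $B$.
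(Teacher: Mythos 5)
Your proposal is correct and follows essentially the same route as the paper's proof: decompose along the projection onto ${\rm Span}\{\psi_i^p\}$, telescope $B\boldsymbol{\Pi}_\mu - B\boldsymbol{z}_\mu$ through the three residual terms using the left-invertibility of $B$ (your $1/\sigma_{\min}(B)$ is exactly the paper's $\|M^{-1}B^T\|_2$, since $M^{-1}B^T$ is the pseudo-inverse of $B$), and absorb $\|B\boldsymbol{\Pi}_\mu-\boldsymbol{p}_\mu\|_2^2$ into the Kolmogorov term via the Voronoi cell volumes. The only cosmetic differences are your use of exact Pythagoras where the paper settles for the Jensen bound $2(a^2+b^2)$.
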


\begin{proof}
There holds
\begin{subequations}
\begin{align}
\left\|p_{{\mu}}-\tilde{p}_\mu\right\|^2_{L^2(\Omega)}&\leq 2~\left\|\sum_{i=1}^{n^p}\left(\left(p_\mu,\psi^{p}_i\right)_{L^2(\Omega)}-z_{\mu,i}\right)\psi^{p}_i\right\|^2_{L^2(\Omega)}+2~\left\|p_\mu-\sum_{i=1}^{n^p}\left(p_\mu,\psi^{p}_i\right)_{L^2(\Omega)}\psi^{p}_i\right\|^2_{L^2(\Omega)}\label{eq:ineq1}\\
&= 2\sum_{i=1}^{n^p}\left(\left(p_\mu,\psi^{p}_i\right)_{L^2(\Omega)}-z_{\mu,i}\right)^2+2~\underset{w\in{\rm Span}\{\psi^{p}_i\}_{1\leq i\leq n^p}}{\inf}~\left\|p_\mu-w\right\|^2_{L^2(\Omega)}\label{eq:ineq2}\\
&\leq 2\sum_{i=1}^{n^p}\left(\Pi_{\mu,i}-z_{\mu,i}\right)^2+2~\underset{v\in K}{\sup}~\underset{w\in{\rm Span}\{\psi^{p}_i\}_{1\leq i\leq n^p}}{\inf}~\left\|v-w\right\|^2_{L^2(\Omega)}\label{eq:ineq3}\\
 &= 2\left\|M^{-1}M\left(\boldsymbol{\Pi}_{\mu}-\boldsymbol{z}_\mu\right)\right\|_2^2+2d(K, {\rm Span}\{\psi_i^p\}_{1\leq i\leq n^p})_{L^2(\Omega)}^2\label{eq:ineq4}\\
 &= 2\left\|M^{-1}B^T\left(B\boldsymbol{\Pi}_{\mu}-\boldsymbol{p}_{\mu}+\boldsymbol{p}_{\mu}-\hat{\boldsymbol{p}}_{\mu}+\hat{\boldsymbol{p}}_{\mu}-B\boldsymbol{z}_\mu\right)\right\|_2^2+2d(K, {\rm Span}\{\psi_i^p\}_{1\leq i\leq n^p})_{L^2(\Omega)}^2\label{eq:ineq5}\\
& \leq 6\left\|M^{-1}B^T\right\|^2_2\left(\|B\boldsymbol{\Pi}_{\mu}-\boldsymbol{p}_{\mu}\|_2^2+\|\boldsymbol{p}_{\mu}-\hat{\boldsymbol{p}}_{\mu}\|_2^2+\|B\boldsymbol{z}_\mu-\hat{\boldsymbol{p}}_{\mu}\|_2^2\right)+2d(K, {\rm Span}\{\psi_i^p\}_{1\leq i\leq n^p})_{L^2(\Omega)}^2\label{eq:ineq6}\\
& \leq C_1\|B\boldsymbol{z}_\mu-\hat{\boldsymbol{p}}_{\mu}\|_2^2 + C_1 \|\boldsymbol{p}_{\mu}-\hat{\boldsymbol{p}}_{\mu}\|_2^2 + C_2 d(K, {\rm Span}\{\psi_i^p\}_{1\leq i\leq n^p})_{L^2(\Omega)}^2\label{eq:ineq7},
\end{align}
\end{subequations}
{where the triangular inequality and the Jensen inequality on the square function have been applied in~\eqref{eq:ineq1}, and between~\eqref{eq:ineq5} and~\eqref{eq:ineq6}.
In~\eqref{eq:ineq7}, the} term $\|B\boldsymbol{\Pi}_{\mu}-\boldsymbol{p}_{\mu}\|_2^2$ has been incorporated in the term\\
$C_2 d(K, {\rm Span}\{\psi_i^p\}_{1\leq i\leq n^p})_{L^2(\Omega)}^2$. This can be done {since}
\begin{equation}
\begin{aligned}
\|B\boldsymbol{\Pi}_{\mu}-\boldsymbol{p}_{\mu}\|_2^2= &{\sum_{k=1}^{m^p}\left(p_\mu(\hat{x}_k)-\sum_{i=1}^{n^p}\left(p_\mu,\psi^{p}_i\right)_{L^2(\Omega)}\psi^{p}_i(\hat{x}_k)\right)^2}\\
\leq &{\frac{1}{\underset{1\leq k'\leq m^p}{\min}\nu_{k'}}\sum_{k=1}^{N_g}\nu_k\left(p_\mu(x_k)-\sum_{i=1}^{n^p}\left(p_\mu,\psi^{p}_i\right)_{L^2(\Omega)}\psi^{p}_i(x_k)\right)^2}\\
= & \frac{1}{\underset{1\leq k'\leq m^p}{\min}\nu_{k'}} \int_{\Omega}\left(p_\mu(x)-\sum_{i=1}^{n^p}\left(p_\mu,\psi^{p}_i\right)_{L^2(\Omega)}\psi^{p}_i(x)\right)^2dx\\
\leq & \frac{1}{\underset{1\leq k'\leq m^p}{\min}\nu_{k'}} d(K, {\rm Span}\{\psi_i^p\}_{1\leq i\leq n^p})_{L^2(\Omega)}^2,
\end{aligned}
\end{equation}
where $\nu_k$ denotes the volume of the cell of the Voronoi diagram associated with integration point $\hat{x}_k$.
\end{proof}

{We now control the numerator in the ROM-Gappy-POD residual $\mathcal{E}^p_\mu$ with respect to the numerator in the relative error $E^p_\mu$  in Proposition~\ref{estimation}, leading to Corollary~\ref{cor}, which provides a sense a consistency: without any error in the reduced prediction, the ROM-Gappy-POD residual $\mathcal{E}^p_\mu$ is zero.}
\begin{proposition}
\label{estimation2}
There exist two positive constants $K_1$ and $K_2$ independent of $\mu$ such that
\begin{equation}
\label{eq:estimation2}
\|{\tilde{\boldsymbol{p}}_{\mu}-\hat{\boldsymbol{p}}_{\mu}}\|_2^2\leq K_1 \left\|p_{{\mu}}-\tilde{p}_\mu\right\|^2_{L^2(\Omega)} + K_2 \|\boldsymbol{p}_{\mu}-\hat{\boldsymbol{p}}_{\mu}\|_2^2 .
\end{equation}
\end{proposition}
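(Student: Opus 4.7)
The plan is to mirror the strategy of Proposition~\ref{estimation} but in the reverse direction: instead of controlling the continuous $L^2$-quantity by the discrete Euclidean one, I bound the discrete Euclidean quantity by the continuous $L^2$-quantity, exploiting the piecewise-constant Voronoi representation of dual fields introduced just before Proposition~\ref{estimation}.

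First I would split $\tilde{\boldsymbol{p}}_\mu - \hat{\boldsymbol{p}}_\mu$ through the HF reference vector $\boldsymbol{p}_\mu$, writing
\begin{equation*}
\tilde{\boldsymbol{p}}_\mu - \hat{\boldsymbol{p}}_\mu = (\tilde{\boldsymbol{p}}_\mu - \boldsymbol{p}_\mu) + (\boldsymbol{p}_\mu - \hat{\boldsymbol{p}}_\mu),
\end{equation*}
and then apply the Jensen inequality on the square function (as in~\eqref{eq:ineq1}) to obtain
\begin{equation*}
\|\tilde{\boldsymbol{p}}_\mu - \hat{\boldsymbol{p}}_\mu\|_2^2 \leq 2\,\|\tilde{\boldsymbol{p}}_\mu - \boldsymbol{p}_\mu\|_2^2 + 2\,\|\boldsymbol{p}_\mu - \hat{\boldsymbol{p}}_\mu\|_2^2.
\end{equation*}
The second term is already of the right form and will contribute to the constant $K_2$; the remaining task is to control the first term by $\|p_\mu - \tilde{p}_\mu\|_{L^2(\Omega)}^2$.

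For this, I would use that $\tilde{p}_\mu$ and $p_\mu$ are both represented as piecewise-constant functions on the Voronoi cells around the HF integration points $x_k$, with cell volumes $\nu_k$ (exactly the representation used in the proof of Proposition~\ref{estimation}). Then $\tilde{\boldsymbol{p}}_{\mu,k} = \tilde{p}_\mu(\hat{x}_k)$ and $\boldsymbol{p}_{\mu,k} = p_\mu(\hat{x}_k)$, and the reduced integration points $\{\hat{x}_k\}_{1\leq k \leq m^p}$ form a subset of $\{x_k\}_{1\leq k \leq N_G}$. Hence, by reducing the sum to the subset of $m^p$ indices and dividing each squared difference by a lower bound on the corresponding weight,
\begin{equation*}
\|\tilde{\boldsymbol{p}}_\mu - \boldsymbol{p}_\mu\|_2^2 = \sum_{k=1}^{m^p} \bigl(p_\mu(\hat{x}_k) - \tilde{p}_\mu(\hat{x}_k)\bigr)^2 \leq \frac{1}{\min_{1\leq k'\leq m^p} \nu_{k'}} \sum_{k=1}^{N_G} \nu_k \bigl(p_\mu(x_k) - \tilde{p}_\mu(x_k)\bigr)^2 = \frac{1}{\min_{1\leq k'\leq m^p} \nu_{k'}}\,\|p_\mu - \tilde{p}_\mu\|_{L^2(\Omega)}^2,
\end{equation*}
where the last equality uses the piecewise-constant Voronoi representation of both fields. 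Combining the two displays gives the claim with $K_1 = 2/\min_{1\leq k'\leq m^p} \nu_{k'}$ and $K_2 = 2$, both independent of $\mu$.

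There is no real obstacle here: the argument is essentially a one-sided comparison between a partial Euclidean sum and a weighted full sum, which is why the statement carries no Kolmogorov $n$-width term (contrary to Proposition~\ref{estimation}). The only point requiring mild care is making sure that $\tilde{p}_\mu$ indeed lies in the same piecewise-constant Voronoi space as $p_\mu$; this follows from the fact that each Gappy-POD mode $\psi_i^p$ is obtained by applying Algorithm~\ref{algo:snapshotPOD} to dual snapshots that are themselves constant on Voronoi cells, so linear combinations with coefficients $z_{\mu,i}$ remain in that space.
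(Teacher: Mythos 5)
Your proposal is correct and follows essentially the same route as the paper: the same splitting of $\tilde{\boldsymbol{p}}_\mu-\hat{\boldsymbol{p}}_\mu$ through $\boldsymbol{p}_\mu$ (noting $\tilde{\boldsymbol{p}}_\mu=B\boldsymbol{z}_\mu$), the same Jensen inequality, and the same Voronoi-volume weighting that majorizes the partial Euclidean sum by the $L^2(\Omega)$-norm, yielding $K_1=2/\min_{1\leq k'\leq m^p}\nu_{k'}$ and $K_2=2$. Your closing remark that $\tilde{p}_\mu$ lies in the piecewise-constant Voronoi space is a harmless extra justification of what the paper leaves implicit.
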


\begin{proof}
There holds
\begin{equation}
\begin{aligned}
\|{\tilde{\boldsymbol{p}}_{\mu}-\hat{\boldsymbol{p}}_{\mu}}\|_2^2&\leq 2~\|B\boldsymbol{z}_\mu-\boldsymbol{p}_{\mu}\|_2^2+2~\|\boldsymbol{p}_{\mu}-\hat{\boldsymbol{p}}_{\mu}\|_2^2\\
&\leq \frac{2}{\underset{1\leq k'\leq m^p}{\min}\nu_{k'}}\sum_{k=1}^{m^p}\nu_k\left(p_\mu(\hat{x}_k)-\sum_{i=1}^{n^p}\boldsymbol{z}_{\mu,i}\psi^{p}_i(\hat{x}_k)\right)^2+2~\|\boldsymbol{p}_{\mu}-\hat{\boldsymbol{p}}_{\mu}\|_2^2\\
&\leq \frac{2}{\underset{1\leq k'\leq m^p}{\min}\nu_{k'}}\int_{\Omega}\left(p_\mu(x)-\sum_{i=1}^{n^p}\boldsymbol{z}_{\mu,i}\psi^{p}_i(x)\right)^2dx+2~\|\boldsymbol{p}_{\mu}-\hat{\boldsymbol{p}}_{\mu}\|_2^2\\
&{=} K_1 \left\|p_{{\mu}}-\tilde{p}_\mu\right\|^2_{L^2(\Omega)} + K_2 \|\boldsymbol{p}_{\mu}-\hat{\boldsymbol{p}}_{\mu}\|_2^2.
\end{aligned}
\end{equation}
\end{proof}

\begin{coro}
\label{cor}
Suppose that the reduced solution is exact up to the considered time step at the online variability $\mu$: $p_{{\mu}}=\tilde{p}_\mu$ in $L^2(\Omega)$. In particular, the behavior law solver has been evaluated with the exact strain tensor and state variables at the integration points  $x_{k}$, leading to $\hat{p}_{\mu}(\hat{x}_{k})=p_{\mu}(\hat{x}_{k})$, $1\leq k\leq m^d$. From Proposition~\ref{estimation2}, $\|{\tilde{\boldsymbol{p}}_{\mu}}-\hat{\boldsymbol{p}}_{\mu}\|_2=0$, and $\mathcal{E}_{\mu}^p=0$.
\end{coro}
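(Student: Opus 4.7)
My plan is to simply invoke Proposition~\ref{estimation2} and show that both terms in its right-hand side vanish under the hypotheses of the corollary, then to conclude using the definition of $\mathcal{E}_\mu^p$ in~\eqref{eq:error_ind_1}.

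First, I would apply Proposition~\ref{estimation2} directly to the online variability $\mu$, obtaining
\[
\|\tilde{\boldsymbol{p}}_{\mu}-\hat{\boldsymbol{p}}_{\mu}\|_2^2 \leq K_1 \|p_\mu - \tilde{p}_\mu\|_{L^2(\Omega)}^2 + K_2 \|\boldsymbol{p}_\mu - \hat{\boldsymbol{p}}_\mu\|_2^2.
\]
The first hypothesis, $p_\mu = \tilde{p}_\mu$ in $L^2(\Omega)$, immediately makes the first term vanish. The second hypothesis states that the behavior law solver was evaluated with the exact strain tensor and internal variables at the reduced integration points $\hat{x}_k$, so $\hat{p}_{\mu,k} = p_\mu(\hat{x}_k)$ for every $1 \leq k \leq m^p$. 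Since by definition $\boldsymbol{p}_\mu = (p_\mu(\hat{x}_k))_{1\leq k\leq m^p}$ and $\hat{\boldsymbol{p}}_\mu = (\hat{p}_{\mu,k})_{1\leq k\leq m^p}$, the second term also vanishes.

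Combining the two, $\|\tilde{\boldsymbol{p}}_\mu - \hat{\boldsymbol{p}}_\mu\|_2 = 0$. Finally, inspecting the definition~\eqref{eq:error_ind_1} of $\mathcal{E}_\mu^p$, the numerator is zero in both branches of the piecewise expression, hence $\mathcal{E}_\mu^p = 0$, as announced. There is no real obstacle here: the whole argument is a direct specialization of Proposition~\ref{estimation2} to the case where the reduced prediction is exact. The only thing to be careful about is matching the index range ($m^p$ versus the $m^d$ appearing in the statement, which looks like a typo) and consistently reading $\boldsymbol{p}_\mu$ as the vector of HF cumulated plasticity values at the reduced integration points, as fixed in the notational recap at the start of Section~\ref{sec:error_ind}.
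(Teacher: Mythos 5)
Your argument is exactly the one the paper intends: the corollary is proved by specializing Proposition~\ref{estimation2}, observing that the hypothesis $p_\mu=\tilde{p}_\mu$ kills the first term and the exactness of the behavior-law evaluations at the reduced integration points kills the second, so the numerator of $\mathcal{E}^p_\mu$ in~\eqref{eq:error_ind_1} vanishes. Your reading of $\boldsymbol{p}_\mu$ and your remark that $m^d$ should be $m^p$ are both consistent with the paper's notation.
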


\subsection{{A calibrated error indicator}}

As we will illustrate in Section~\ref{sec:num}, the evaluations of the  ROM-Gappy-POD residual $\mathcal{E}^p_{\mu}$~\eqref{eq:error_ind_1} and the error $E^p_{\mu}$~\eqref{eq:error_1} are very correlated in our numerical simulations. Our idea is to exploit this correlation by training a Gaussian process regressor for the function $\mathcal{E}^p_{\mu}\mapsto {E}^p_{\mu}$.
At the end of the \textit{offline} stage, we propose to compute reduced predictions at variability values $\{\mu_i\}_{1\leq i\leq N_c}$ encountered during the data generation step, and the corresponding couples $\left(E^p_{\mu_i}, \mathcal{E}^p_{\mu_i}\right)$, $1\leq i\leq N_c$. A Gaussian process regressor is trained on these values and we define an approximation function
\begin{equation}
\label{eq:indicator}
\mathcal{E}^p_{\mu}\mapsto {\rm Gpr}^p(\mathcal{E}^p_{\mu})
\end{equation}
 for the error ${E}^p_{\mu}$ at variability $\mu$ as the mean plus 3 times the standard deviation of the predictive distribution at the query point $\mathcal{E}^p_{\mu}$: this is our proposed error indicator. If the dispersion around the learning data is small for certain values $\mathcal{E}^p_{\mu}$, then adding 3 times the standard deviation will not change very much the prediction, whereas for values with large dispersions of the learning data, this correction aims to provide an error indicator larger than the error. We use the GaussianProcessRegressor python class from scikit-learn~\cite{scikit-learn}.
Notice that although some operations in computational complexity dependent on $N$ are carried-out, we are still in the \textit{offline} stage, and they are much faster than the resolutions of the large size systems of nonlinear equations~\eqref{eq:Newton}.
If the \textit{offline} stage is correctly carried-out and since $\mathcal{E}^p_{\mu}$ is highly correlated with the error, only small values for $\mathcal{E}^p_{\mu}$ are expected to be computed. Hence, in order to train the Gaussian process regressor correctly for larger values of the error, the reduced Newton algorithm~\eqref{eq:reducedNewton} is solved with a large tolerance $\epsilon^{\rm ROM}_{\rm Newton}=0.1$. We call these operations ``calibration of the error indication'', see~Algorithm~\ref{algo:calibration} for a description and Figure~\ref{fig:workflow_offline} for a presentation of the workflow featuring this calibration step.

\begin{algorithm}[H]
 \KwIn{outputs of the data generation, data compression and operator compression steps of Section~\ref{sec:rom}}
 \KwOut{Approximation function $\mathcal{E}^p_{\mu}\mapsto{\rm Gpr}^p(\mathcal{E}^p_{\mu})$ of the error ${E}^p_{\mu}$}
 \textbf{Initialization:} $\mathcal{X}=\emptyset$

\For{$i\gets 1$ \KwTo $N_c$}{
  Construct and solve the reduced problem~\eqref{eq:reducedNewton} with $\epsilon^{\rm ROM}_{\rm Newton}=0.1$

  Compute the reconstructed plasticity $\tilde{p}_{\mu_i}$ using Algorithm~\ref{algo:GappyPOD2} and  $\mathcal{E}^p_{\mu_i}$ using~\eqref{eq:error_ind_1}
    
  Compute the error $E^p_{\mu_i}$ using~\eqref{eq:error_1}

  $\mathcal{X}\leftarrow \mathcal{X}\cup \left(\mathcal{E}^p_{\mu_i}, E^p_{\mu_i}\right)$
}

Construct an approximation function $\mathcal{E}^p_{\mu}\mapsto{\rm Gpr}^p(\mathcal{E}^p_{\mu})$ of the error ${E}^p_{\mu}$ using a Gaussian process regression and the data from $\mathcal{X}$
 \caption{\label{algo:calibration}Calibration of the error indicator.}
\end{algorithm}

\begin{figure}[H]
  \centering
  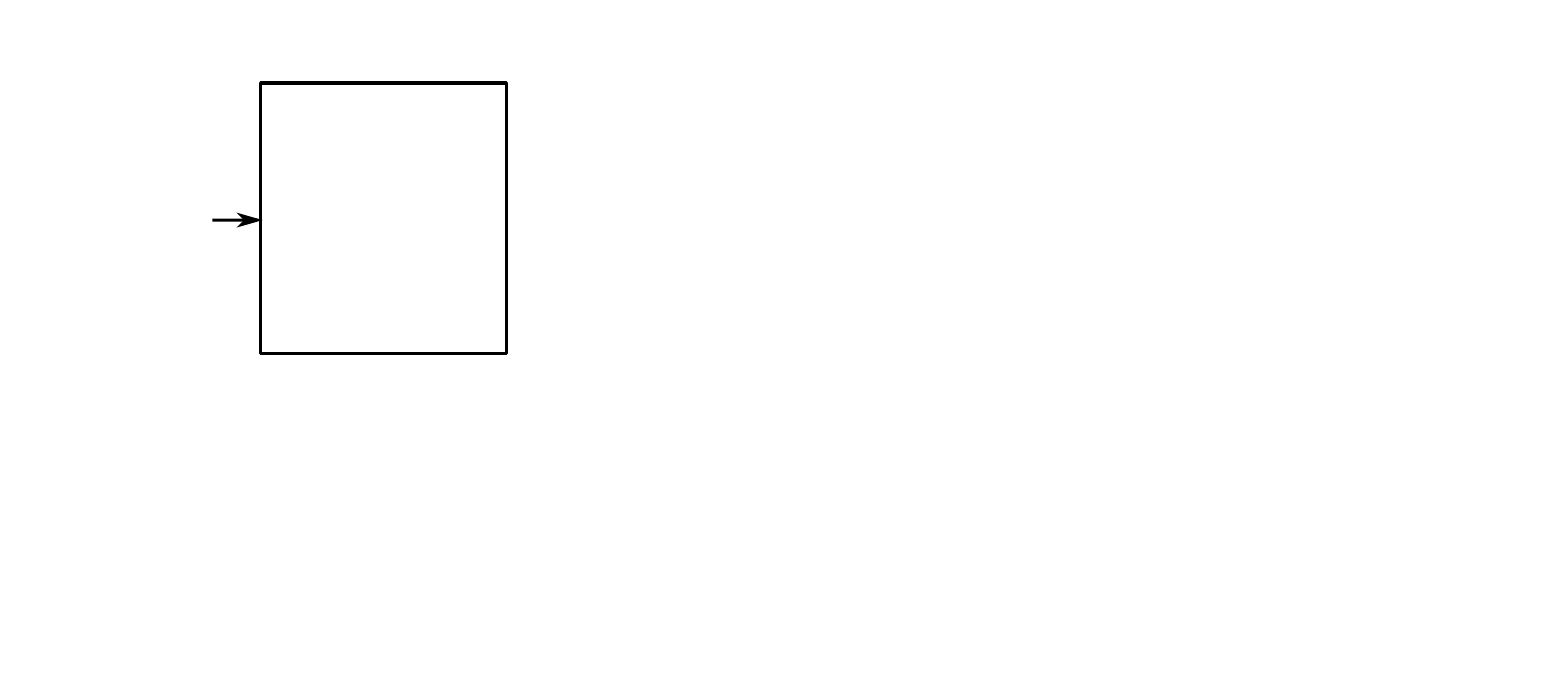
  \caption{Workflow for the \textit{offline} stage with error indicator calibration.}
  \label{fig:workflow_offline}
\end{figure}

We recall that in model order reduction, the original hypothesis is the existence of a low-dimensional vector space where an acceptable approximation of the high-fidelity solution lies. The hypothesis is formalized under a rate of decrease for the Kolmogorov $n$-width with respect to the dimension of this vector space. The same hypothesis is made when using the Gappy-POD to reconstruct the dual quantities, which are expressed as a linear combination of constructed modes. For both the primal and dual quantities, the modes are computed by searching some low-rank structure of the high-fidelity data.
The coefficients of the linear combination for reconstructing the primal quantities are given by the solution of the reduced Newton algorithm~\eqref{eq:reducedNewton}. After convergence, the residual is small, even in cases where the reduced order model exhibits large errors with respect to the high-fidelity reference: this residual gives no information on the distance between the reduced solution and the high-fidelty finite element space. However, in the \textit{online} phase of the ROM-Gappy-POD reconstruction in Algorithm~\ref{algo:GappyPOD2}, the coefficients $\hat{p}_{\mu,k}$ contain information from the high-fidelity behavior law solver. Moreover, an overdetermined least-square is solved, which can provide a nonzero residual that implicitly contains this information from the high-fidelity behavior law solver: namely the distance between the prediction from the behavior law and the vector space spanned by the Gappy-POD modes (restricted to the reduced integration points): this is the term $\|B\boldsymbol{z}_\mu-\hat{\boldsymbol{p}}_{\mu}\|_2$ in~\eqref{eq:estimation}. Hence, the ability of the \textit{online} variability to be expressed on the Gappy-POD modes is monitored through the behavior law solver on the reduced integration points. When the ROM is solved for an \textit{online} variability not included in the \textit{offline} variabilities, then the new physical solution cannot be correctly interpolated using the POD and Gappy-POD modes: hence, the ROM-Gappy-residual becomes large. 
From Proposition~\ref{estimation2}, if $\|B\boldsymbol{z}_\mu-\hat{\boldsymbol{p}}_{\mu}\|_2=\|\tilde{\boldsymbol{p}}_{\mu}-\hat{\boldsymbol{p}}_{\mu}\|_2$ is large, then the global error $\left\|p_{{\mu}}-\tilde{p}_\mu\right\|_{L^2(\Omega)}$ and/or the error at the reduced integration points $\hat{x}_k$ is large, which makes $\|B\boldsymbol{z}_\mu-\hat{\boldsymbol{p}}_{\mu}\|_2$ a good candidate for a enrichement criterion for the ROM. 
A limitation of the error indicator can occur if the \textit{online} variability activates strong nonlinearities on areas containing no point from the reduced integration scheme, namely through the term $C_2 d(K, {\rm Span}\{\psi_i^p\}_{1\leq i\leq n^p})_{L^2(\Omega)}^2$  in~\eqref{eq:estimation}.

{We recall that the error indicator~\eqref{eq:indicator} is a regression of the function $\mathcal{E}^p_{\mu}\mapsto {E}^p_{\mu}$. In the \textit{online} phase, we only need to evaluate $\mathcal{E}^p_{\mu}$ and do not require any estimation for the other terms and constants appearing in Propositions~\ref{estimation} and~\ref{estimation2}.}

Equipped with an efficient error indicator, we are now able to assess the quality of the approximation made by the reduced order model in the \textit{online} phase. If the error indicator is too large, {an enrichment step occurs:} the high-fidelity model is used to compute a new high-fidelity snapshot, which is used to update the POD and Gappy-POD basis, as well as the reduced integration schemes. Notice that {for the enrichment steps to be computed}, the displacement field and all the state variables of the previous time step need to be reconstructed on the complete mesh $\Omega$ to provide the high-fidelity solver with the correct material state. The workflow for the \textit{online} stage with enrichment is presented in Figure~\ref{fig:workflow_online}.

\begin{figure}[H]
  \centering
  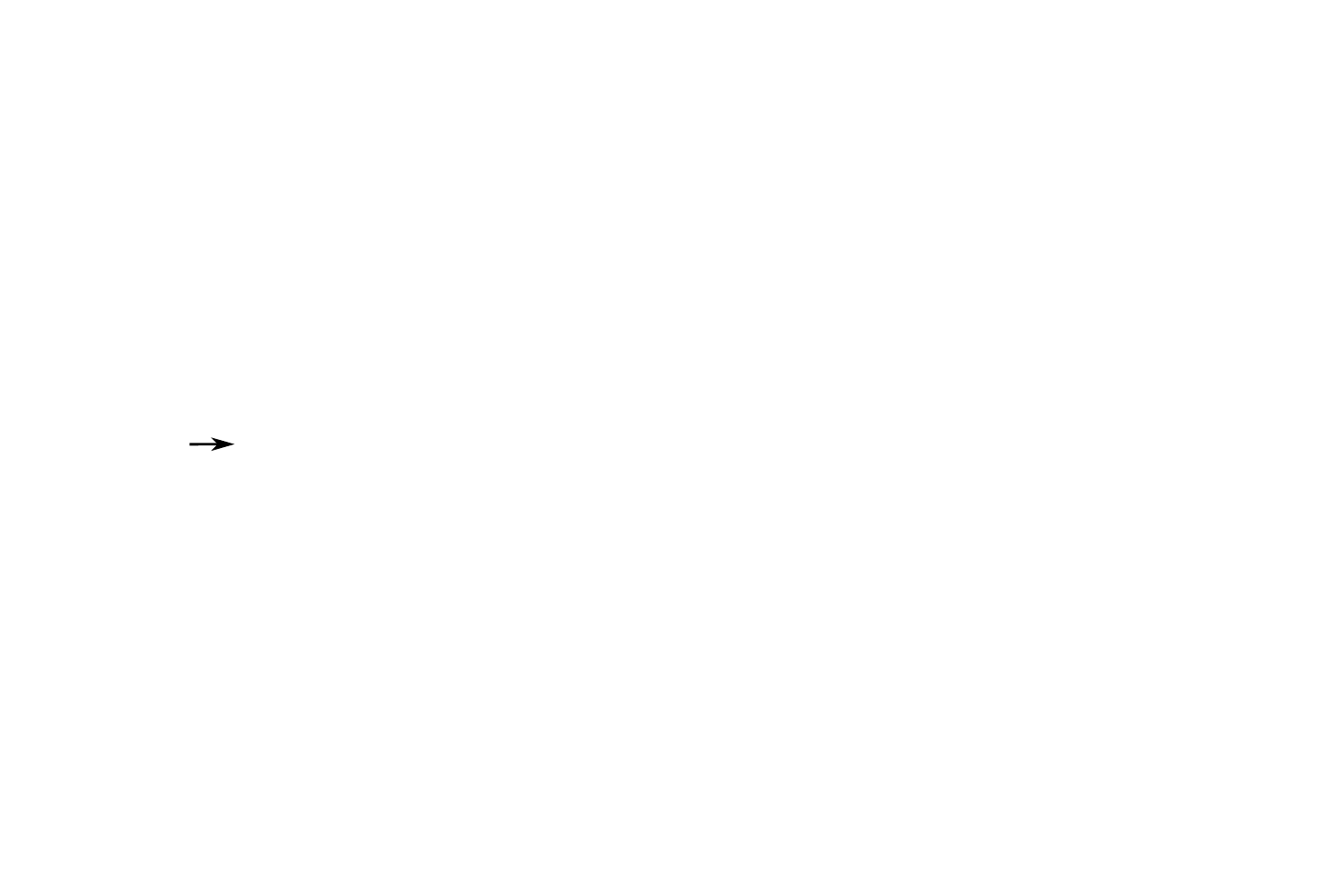
  \caption{Workflow for the \textit{online} stage with enrichment.}
  \label{fig:workflow_online}
\end{figure}

\begin{remark}[\textit{online} efficiency]
\label{rq:efficiency}
The computation of the ROM-Gappy-POD residual~\eqref{eq:error_ind_1} is efficient, since $\tilde{\boldsymbol{p}}_{\mu}$ and $\hat{\boldsymbol{p}}_{\mu}$ are already computed for the reconstruction, and {$m^p$ depending only on the approximation of $\sigma:\epsilon$ and $p$, it} is independent of $N$. The evaluation of ${\rm Gpr}^p(\mathcal{E}^p_{\mu})$ is also in computational complexity independent of $N$.

If the enrichment is activated during the \textit{online} phase, a high-fidelity solution is computed, {which is a computationally demanding task}. This is the price to add high-fidelity information {in} the exploitation phase. We will see in Section~\ref{sec:num} that without this enrichment in our applications, the considered \textit{online} variability on the temperature field strongly degrades the accuracy of the reduced order model prediction. The nonparametrized variability also induces online pretreatments {in computational complexity depending on $N$}, namely the precomputation of $\displaystyle\int_\Omega f_{\mu}\cdot\psi_i$ and $\displaystyle\int_{\partial\Omega_N}T_{N,\mu}\cdot\psi_i$ in~\eqref{eq:redProblem}, which is in practice much faster than other integrals that require behavior law resolutions.

Notice that the \textit{online} stage can be further optimized by replacing the data compression and \textit{offline} Gappy-POD steps by incremental variants, such as the incremental POD~\cite{incrementalPOD}. For the operator compression, the Nonnegative Orthogonal Matching Pursuit described in Algorithm~\ref{algo:NNOMP} is not restarted from zero, but initialized by the current reduced quadrature scheme. Notice also that for the moment, the reduced order model is enriched using a complete {precomputed} reference high-fidelity computation, {so that no speedup is obtained in practice}. We still need to consider restart strategies to call the high-fidelity solver only at the time step of enrichment, from a complete mechanical state reconstructed from the prediction of the reduced order model at the previous time step, {which will be the subject of future work}.
\end{remark}

When the framework is used in parallel, with subdomains, the calibration of the error indicator is local to each subdomain, so that the decision of enrichment {in the full domain} during the \textit{online} stage can be {triggered} by a particular subdomain {of interest}.

\section{Numerical applications}
\label{sec:num}

We consider two behavior laws in the numerical applications:
\begin{itemize}
\item[(elas)] isotropic thermal expansion and temperature-dependent cubic elasticity: the behavior law is $\sigma=\mathcal{A}:\left(\epsilon-\epsilon^{\rm th}\right)$, where $\epsilon^{\rm th}=\alpha^{\rm th}\left(T-T_0\right)I$, with $I$ the second-order identity tensor and $\alpha^{\rm th}$ the thermal expansion coefficient in MPa.K${}^{-1}$ depending on the temperature.
The elastic stiffness tensor $\mathcal{A}$ does not depend on the solution $u$ and is defined in Voigt notations by
\begin{equation}
\label{eq:tensorElas}
\mathcal{A}=
\begin{pmatrix} 
y_{1111} & y_{1122} & y_{1122} & 0 & 0 & 0 \\
y_{1122} & y_{1111} & y_{1122} & 0 & 0 & 0 \\
y_{1122} & y_{1122} & y_{1111} & 0 & 0 & 0 \\
0&0 &0 & y_{1212} & 0&0 \\
0&0 & 0 &0 &  y_{1212} &0 \\
0&0  &0 &0 &0 & y_{1212}  \\
\end{pmatrix},
\end{equation}
where the temperature $T$ is given by the thermal loading, $T_0=20^\circ$C is a reference temperature and the coefficients $y_{1111}$, $y_{1122}$ and $y_{1212}$ ({elastic coefficients} in MPa) depend on the temperature. This law does not feature any internal variable to compute.

\item[(evp)] Norton flow with nonlinear kinematic hardening: the elastic part is given by $\sigma=\mathcal{A}:\left(\epsilon-\epsilon^{\rm th}-\epsilon^P\right)$, where $\mathcal{A}$ and $\epsilon^{\rm th}$ are the same as the (elas) law, $\epsilon^P$ is the plastic strain tensor. The viscoplastic part requires solving the system of ODEs:
\begin{equation}
\left\{
\begin{aligned}
\dot{\epsilon}^P&=\dot{p}\sqrt{\frac{3}{2}}\frac{s-\frac{2}{3}C\alpha}{\sqrt{\left(s-\frac{2}{3}C\alpha\right):\left(s-\frac{2}{3}C\alpha\right)}},\\
\dot{\alpha}&=\dot{\epsilon}^P - \dot{p}D\alpha,\\
\dot{p}&=\left\langle\frac{f_r}{K}\right\rangle^m,
\end{aligned}
\right.
\end{equation}
where $p$ is the cumulated plasticity, $f_r=\sqrt{\frac{3}{2}}\sqrt{\left(s-\frac{2}{3}C\alpha\right):\left(s-\frac{2}{3}C\alpha\right)} - R_0$ defines the yield surface, {$\alpha$ (dimensionless) is the internal variable associated to the back-stress tensor $X=\frac{2}{3}C\alpha$  representing the center of the elastic domain in the stress space}, $s:=\sigma-\frac{1}{3}{\rm Tr}(\sigma)I$ (with Tr the trace operator) is the deviatoric component of the stress tensor, and $\langle\cdot\rangle$ denotes the positive part operator. The yield criterion is $f_r\leq 0$.
The hardening material coefficients $C$ (in MPa) and $D$ (dimensionless), the Norton material coefficient $K$ (in MPa.s${}^{\frac{1}{\rm m}}$), the Norton exponential material coefficient $m$ (dimensionless), and the initial yield stress $R_0$ (in MPa) depend on the temperature.
The internal variables considered here are $\epsilon^P$, $\alpha$ and $p$, and the ODE's initial conditions are $\epsilon^P=0$, $\alpha=0$ and $p=0$ at $t=0$.
\end{itemize}

Two test cases are considered: an academic one in Section~\ref{sec:aca} and a high-pressure turbine blade setting of industrial complexity in Section~\ref{sec:indus}.

\subsection{Academic example}
\label{sec:aca}

We consider a simple geometry in the shape of a bow tie, to enforce plastic effects on the tightest area, see Figure~\ref{fig:aca1}. The structure is subjected to different variabilities of the loading (temperature, rotation, pressure), described in Figures~\ref{fig:aca1}-\ref{fig:aca3}. The axis of rotation {is located on the left of the object along the x-axis,} and the pressure field is represented in Figure~\ref{fig:aca1}. {The rotation of the object is not computed: only the inertia effects are modeled in the volumic force term $f$ in~\eqref{eq:r2}}. Four temperature fields are considered, two of them are represented in Figure~\ref{fig:aca2} (``temperature\_field\_1'' is a uniform $20^\circ$C field, {``temperature\_field\_2'' is a 3D Gaussian with a maximum in the thin part of the object, close to an edge}, ``temperature\_field\_3'' is proportional to ``temperature\_field\_2'', ``temperature\_field\_4'' obtained from ``temperature\_field\_2'' by random perturbation of $10\%$ magnitude independently at each point). Notice that the {irregularity} of ``temperature\_field\_4'' will lead to small scaled structures in the cumulated plasticity and stress fields involving this variability. {Notice also that the temperature field are not computed during the simulation: they are loading data for the mechanical computation.} Figure~\ref{fig:aca3} presents the three variabilities considered: \emph{computation 1} and \emph{computation 2} encountered in the \textit{offline} phase, and \emph{new} encountered in the \textit{online} phase. The pressure loading is obtained by multiplying the pressure coefficient by the pressure field represented in Figure~\ref{fig:aca1} (normals on the boundary are directed towards the exterior) and at each time step, the temperature field is obtained by linear interpolation between the previous and following fields in the temporal sequence. {Notice that \emph{computation 1} and \emph{computation 2} are not defined on the same temporal range.}

\begin{figure}[H]
  \centering
 \includegraphics[width=0.6\textwidth]{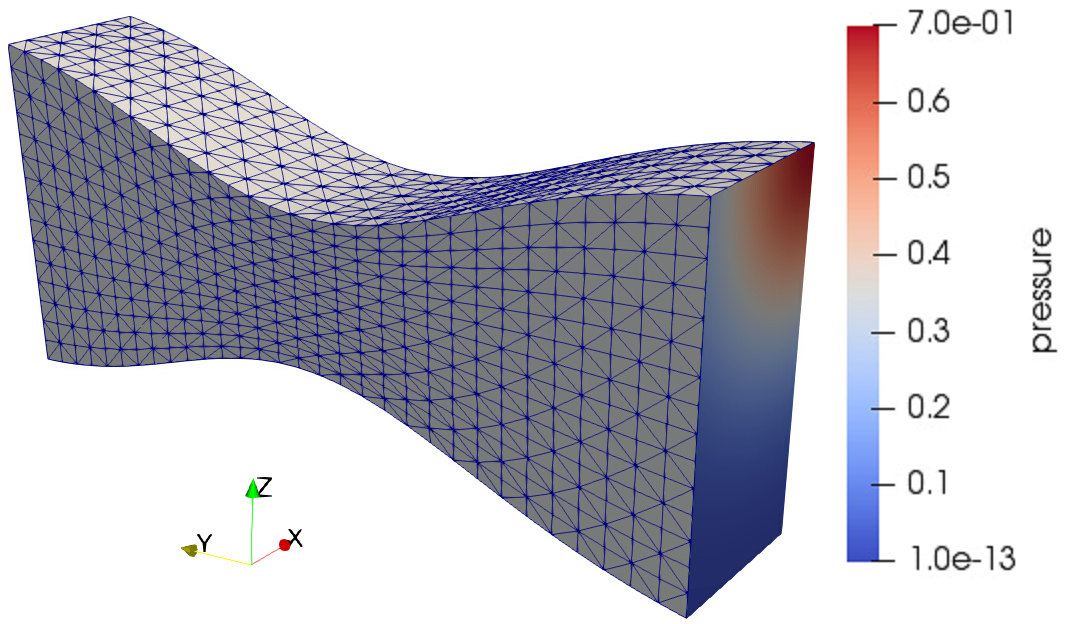}
  \caption{Academic test case: mesh and pressure field represented on its surface of application; {the axis of rotation is located on the left of the object along the x-axis}.}
  \label{fig:aca1}
\end{figure}

\begin{figure}[H]
  \centering
  \begin{minipage}{.49\linewidth}
 \includegraphics[width=\textwidth]{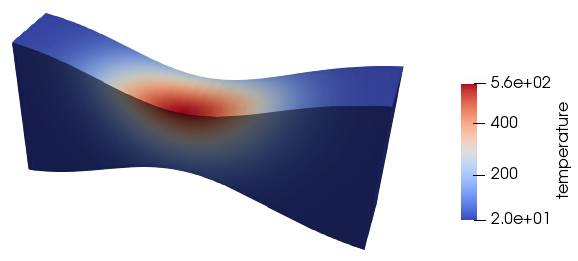}
 \qquad ''temperature\_field\_2''
   \end{minipage} \hfill
   \begin{minipage}{.49\linewidth}
 \includegraphics[width=\textwidth]{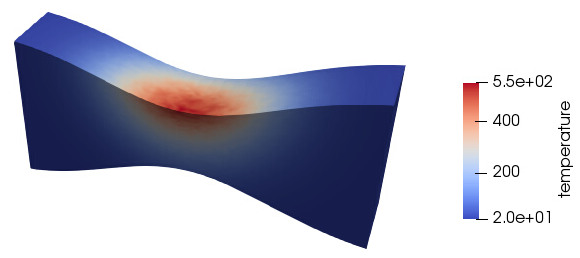}
 \qquad ''temperature\_field\_4''
   \end{minipage}
  \caption{Two different variabilities for the temperature loading (in ${}^\circ$C) used in the academic test case.}
  \label{fig:aca2}
\end{figure}

\setlength\figureheight{4cm}
\setlength\figurewidth{5cm}
\begin{figure}[H]
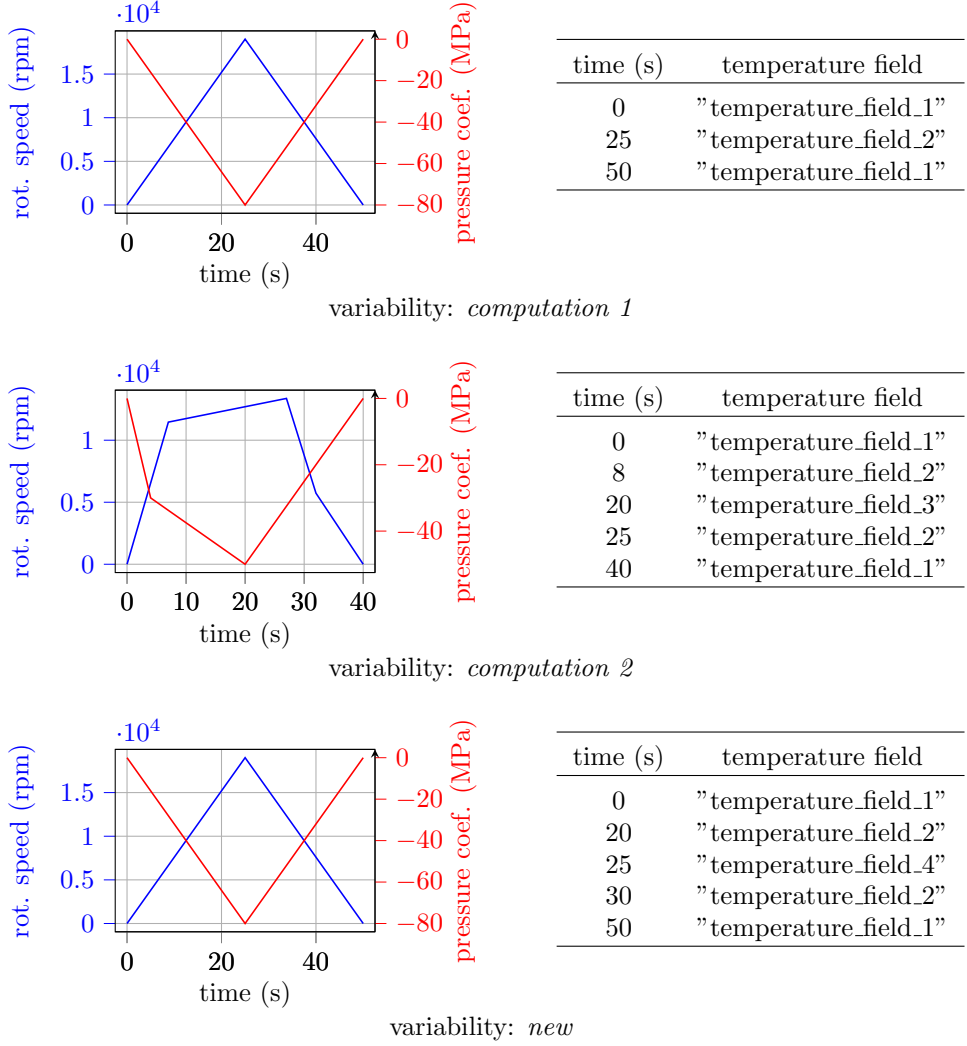

  \centering
\input{rot1.tex}
    \qquad
    \begin{tabular}[b]{cc}\hline &\\[-0.8em]
      time (s) & temperature field \\ \hline &\\[-0.8em]
      0  & ''temperature\_field\_1'' \\ 
      25 & ''temperature\_field\_2'' \\ 
      50 & ''temperature\_field\_1'' \\ \hline
\vspace{1.cm}      
    \end{tabular}
 \\[-0.5em] variability: \emph{computation 1}\\[1em]
    
\input{rot2.tex}
    \qquad
    \begin{tabular}[b]{cc}\hline &\\[-0.8em]
      time (s) & temperature field \\ \hline &\\[-0.8em]
      0  & ''temperature\_field\_1'' \\ 
      8 & ''temperature\_field\_2'' \\ 
      20 & ''temperature\_field\_3'' \\ 
      25 & ''temperature\_field\_2'' \\ 
      40 & ''temperature\_field\_1'' \\ \hline
\vspace{0.5cm}      
    \end{tabular}
 \\[-0.5em] variability: \emph{computation 2}\\[1em]
    
\input{rot1.tex}
    \qquad
    \begin{tabular}[b]{cc}\hline &\\[-0.8em]
      time (s) & temperature field \\ \hline &\\[-0.8em]
      0  & ''temperature\_field\_1'' \\ 
      20 & ''temperature\_field\_2'' \\ 
      25 & ''temperature\_field\_4'' \\ 
      30 & ''temperature\_field\_2'' \\ 
      50 & ''temperature\_field\_1'' \\ \hline
\vspace{.5cm}      
    \end{tabular}
\\[-0.5em] variability: \emph{new}
    
  \caption{Considered loading variabilities for the academic test case; left: rotation speed (\protect\blueline) and pressure coefficient (\protect\redline) with respect to the time; right: temporal sequence for the temperature field.}
  \label{fig:aca3}
  \end{figure}

The characteristics for the academic test cases are given in Table~\ref{tab:aca4}.
\begin{table}[H]
  \centering
\begin{tabular}{|c|c|}
  \hline &\\[-0.8em]
  number of dofs & 78'120\\
  \hline &\\[-0.8em]
  number of (quadratic) tetrahedra & 16'695 \\
  \hline &\\[-0.8em]
  number of integration points & 81'375 \\
  \hline &\\[-0.8em]
  number of time steps &  \emph{computation 1}: 50,  \emph{computation 2}: 40,  \emph{new}: 50\\
  \hline &\\[-0.8em]
 behavior law &evp (Norton flow with nonlinear kinematic hardening \\
  \hline
\end{tabular}
  \caption{Characteristics for the academic test case.}
  \label{tab:aca4}
\end{table}

The correlations between the ROM-Gappy-POD residual $\mathcal{E}$~\eqref{eq:error_ind_1} and the error $E$~\eqref{eq:error_1} on the dual quantities cumulated plasticity $p$ and first component of the stress tensor $\sigma_{11}$ are investigated in Table~\ref{tab:aca5}. The reduced solutions used for $\mathcal{E}$ correspond to the calibration step in the \textit{offline} stage, in the second row of Figure~\ref{fig:workflow_offline}, where we recall that the reduced Newton algorithm~\eqref{eq:reducedNewton} is computed with a large tolerance $\epsilon^{\rm ROM}_{\rm Newton}=0.1$ on the variabilities encountered in the data generation step. For the cumulated plasticity field, the values before the first plastic effects are neglected. A strong correlation appears in all the considered cases, although outliers are observed for the last time steps, where the building of residual stresses at low loadings are more difficult to predict with the ROM.
\setlength\figureheight{3.5cm}
\setlength\figurewidth{3.5cm}
\begin{table}[H]
\centering
\begin{tabular}{>{\centering\arraybackslash}m{6em}|>{\centering\arraybackslash}m{19.25em}|>{\centering\arraybackslash}m{19.25em}|}
\cline{2-3}
 & $p$ & $\sigma_{11}$ \\ \hline
\multicolumn{1}{|>{\centering\arraybackslash}m{6em}|}{}&&\\[-0.5em]
\multicolumn{1}{|>{\centering\arraybackslash}m{6em}|}{computation 1} & \input{academicSansEnrichissementOffComp1OnNewindicator_evrcum_Computation1.tex}\input{academicSansEnrichissementOffComp1OnNewscatter_evrcum.tex}
  & \input{academicSansEnrichissementOffComp1OnNewindicator_sig11_Computation1.tex}\input{academicSansEnrichissementOffComp1OnNewscatter_sig11.tex} \\ \hline
\multicolumn{1}{|>{\centering\arraybackslash}m{6em}|}{}&&\\[-0.5em]
\multicolumn{1}{|>{\centering\arraybackslash}m{6em}|}{computation 2} &  \input{academicSansEnrichissementOffComp1Comp2OnNewindicator_evrcum_Computation2.tex}\input{academicSansEnrichissementOffComp1Comp2OnNewscatter_evrcum.tex} & \input{academicSansEnrichissementOffComp1Comp2OnNewindicator_sig11_Computation2.tex}\input{academicSansEnrichissementOffComp1Comp2OnNewscatter_sig11.tex} \\ \hline
\end{tabular}
\caption{Illustration of the correlation between the ROM-Gappy-POD residual $\mathcal{E}$~\eqref{eq:error_ind_1} and the error $E$~\eqref{eq:error_1} on the dual quantities cumulated plasticity $p$ and first component of the stress tensor $\sigma_{11}$.}
\label{tab:aca5}
\end{table}

We now illustrate the quality of the error indicator~\eqref{eq:indicator}, and its ability to increase the accuracy of the reduced order model when used in an enrichment strategy as described in the workflow illustrated in Figure~\ref{fig:workflow_online}. In Tables~\ref{tab:aca6} and~\ref{tab:aca7}, we compare the error indicator~\eqref{eq:indicator} with the error~\eqref{eq:error_1} for various \textit{offline} and \textit{online} variabilities respectively without and with enrichment of the reduced order model. Although our error indicator is not a certified upper bound, we observe that thanks to the calibration process, its values are in the vast majority larger than the exact error, except in two regimes: (i) when the errors are very large (the calibration has been carried-out for mild errors, since we used the references from the \textit{offline} variabilities and enforced reasonable errors in line 3 of Algorithm~\ref{algo:calibration}), and (ii) sometimes in the last time steps where the residual stresses build up and where we identified outliers in the Gaussian regressor process. In Table~\ref{tab:aca6}, we observe that without enrichment the errors are controlled whenever the \textit{online} variability is contained in the \textit{offline} variability. In the other cases, the error becomes very large, and the ROM prediction becomes useless. In Table~\ref{tab:aca7}, at the times when the ROM is enriched, both the error indicator and the error are set to zero, since the ROM prediction is replaced by a HF solution. The ROM is enriched when the ${\rm Gpr}^p(\mathcal{E}^p)>0.2$ or ${\rm Gpr}^{\sigma_{11}}(\mathcal{E}^{\sigma_{11}})>0.2$. We observe that for cases where the \textit{online} variability is included in the \textit{offline} variability, the errors are still controlled and no enrichment occurs. In the other cases, the enrichment occurs a few times, so that the errors remain controlled below 0.2. For the \textit{online} variability \emph{new}, the ROM is enriched 6 times for an \textit{offline} variability \emph{computation 1} and only 3 times for an \textit{online} variability \emph{computation 1} and  \emph{computation 2}: in the latter case, the initial reduced order basis generates a larger base and needs less enrichment.

\setlength\figureheight{3.5cm}
\setlength\figurewidth{3.2cm}
\begin{table}[H]
\centering
\begin{tabular}{|>{\centering\arraybackslash}m{8.5em}|*{2}{>{\centering\arraybackslash}m{18em}|}}\hline
&&\\[-1.em]
\backslashbox{\textit{online}}{\textit{offline}}
&\makebox[18em]{computation 1}&\makebox[18em]{computation 1 and computation 2}\\\hline
&&\\[-0.5em]
computation 1 &\input{academicSansEnrichissementOffComp1OnComp1indicator_evrcum_Online.tex}\input{academicSansEnrichissementOffComp1OnComp1indicator_sig11_Online.tex}&\input{academicSansEnrichissementOffComp1Comp2OnComp1indicator_evrcum_Online.tex}\input{academicSansEnrichissementOffComp1Comp2OnComp1indicator_sig11_Online.tex}\\\hline
&&\\[-0.5em]
computation 2 &\input{academicSansEnrichissementOffComp1OnComp2indicator_evrcum_Online.tex}\input{academicSansEnrichissementOffComp1OnComp2indicator_sig11_Online.tex}&\input{academicSansEnrichissementOffComp1Comp2OnComp2indicator_evrcum_Online.tex}\input{academicSansEnrichissementOffComp1Comp2OnComp2indicator_sig11_Online.tex}\\\hline
&&\\[-0.5em]
new &\input{academicSansEnrichissementOffComp1OnNewindicator_evrcum_Online.tex}\input{academicSansEnrichissementOffComp1OnNewindicator_sig11_Online.tex}&\input{academicSansEnrichissementOffComp1Comp2OnNewindicator_evrcum_Online.tex}\input{academicSansEnrichissementOffComp1Comp2OnNewindicator_sig11_Online.tex}\\\hline
\end{tabular}
\caption{Comparison of the error indicator~\eqref{eq:indicator} with the error~\eqref{eq:error_1} for various \textit{offline} and \textit{online} variabilities, without enrichment of the reduced order model. The category ``\textit{offline}'' for the columns refers to the variabilities used in the data generation step of the \textit{offline} stage, whereas the category ``\textit{online}'' for the rows refers to the variability considered in the \textit{online} stage.}
\label{tab:aca6}
\end{table}

\setlength\figureheight{3.5cm}
\setlength\figurewidth{3.2cm}
\begin{table}[H]
\centering
\begin{tabular}{|>{\centering\arraybackslash}m{8.5em}|*{2}{>{\centering\arraybackslash}m{18em}|}}\hline
&&\\[-1.em]
\backslashbox{\textit{online}}{\textit{offline}}
&\makebox[18em]{computation 1}&\makebox[18em]{computation 1 and computation 2}\\\hline
&&\\[-0.5em]
computation 1 &\input{academicAvecEnrichissementOffComp1OnComp1indicator_evrcum_Online.tex}\input{academicAvecEnrichissementOffComp1OnComp1indicator_sig11_Online.tex}&\input{academicAvecEnrichissementOffComp1Comp2OnComp1indicator_evrcum_Online.tex}\input{academicAvecEnrichissementOffComp1Comp2OnComp1indicator_sig11_Online.tex}\\\hline
&&\\[-0.5em]
computation 2 &\input{academicAvecEnrichissementOffComp1OnComp2indicator_evrcum_Online.tex}\input{academicAvecEnrichissementOffComp1OnComp2indicator_sig11_Online.tex}&\input{academicAvecEnrichissementOffComp1Comp2OnComp2indicator_evrcum_Online.tex}\input{academicAvecEnrichissementOffComp1Comp2OnComp2indicator_sig11_Online.tex}\\\hline
&&\\[-0.5em]
new &\input{academicAvecEnrichissementOffComp1OnNewindicator_evrcum_Online.tex}\input{academicAvecEnrichissementOffComp1OnNewindicator_sig11_Online.tex}&\input{academicAvecEnrichissementOffComp1Comp2OnNewindicator_evrcum_Online.tex}\input{academicAvecEnrichissementOffComp1Comp2OnNewindicator_sig11_Online.tex}\\\hline
\end{tabular}
\caption{Comparison of the error indicator~\eqref{eq:indicator} with the error~\eqref{eq:error_1} for various \textit{offline} and \textit{online} variabilities, with enrichment of the reduced order model.}
\label{tab:aca7}
\end{table}

We now compare the reference HF prediction of the considered \textit{online} variability with the ROM prediction without and with enrichment, in a case where this \textit{online} variability is included in the \textit{offline} variability (Figure~\ref{fig:acaRes}) and in a case where it is not included (Figure~\ref{fig:acaRes2}). {In Figures~\ref{fig:acaRes} and~\ref{fig:acaRes2},  dual quantities with index ``ref." refers to the HF reference at the considered \textit{offline} variability, ``nores.'' to the ROM without enrichment and the absence of index to the ROM with enrichment.} In the first case, the ROM predictions with and without enrichment are accurate (the magnitude of $\sigma_{11}$ is small with respect to the ones of $\sigma_{22}$, so that the small differences observed in the second plot of Figure~\ref{fig:acaRes} are very small with respect to the magnitude of the tensor $\sigma$). In the second case, the ROM without enrichment leads to large errors, whereas the enrichment allows a good accuracy. We notice that due to the particular profile of the temperature loading ``temperature\_field\_4" (c.f. Figure~\ref{fig:aca2}), the field $\sigma_{11}$ is irregular. Even in such an unfavorable case, only 3 enrichment steps by HFM solutions allows a good accuracy for the ROM.
\setlength\figureheight{3.7cm}
\setlength\figurewidth{5cm}
\begin{figure}[H]
  \centering
 \includegraphics[width=0.49\textwidth]{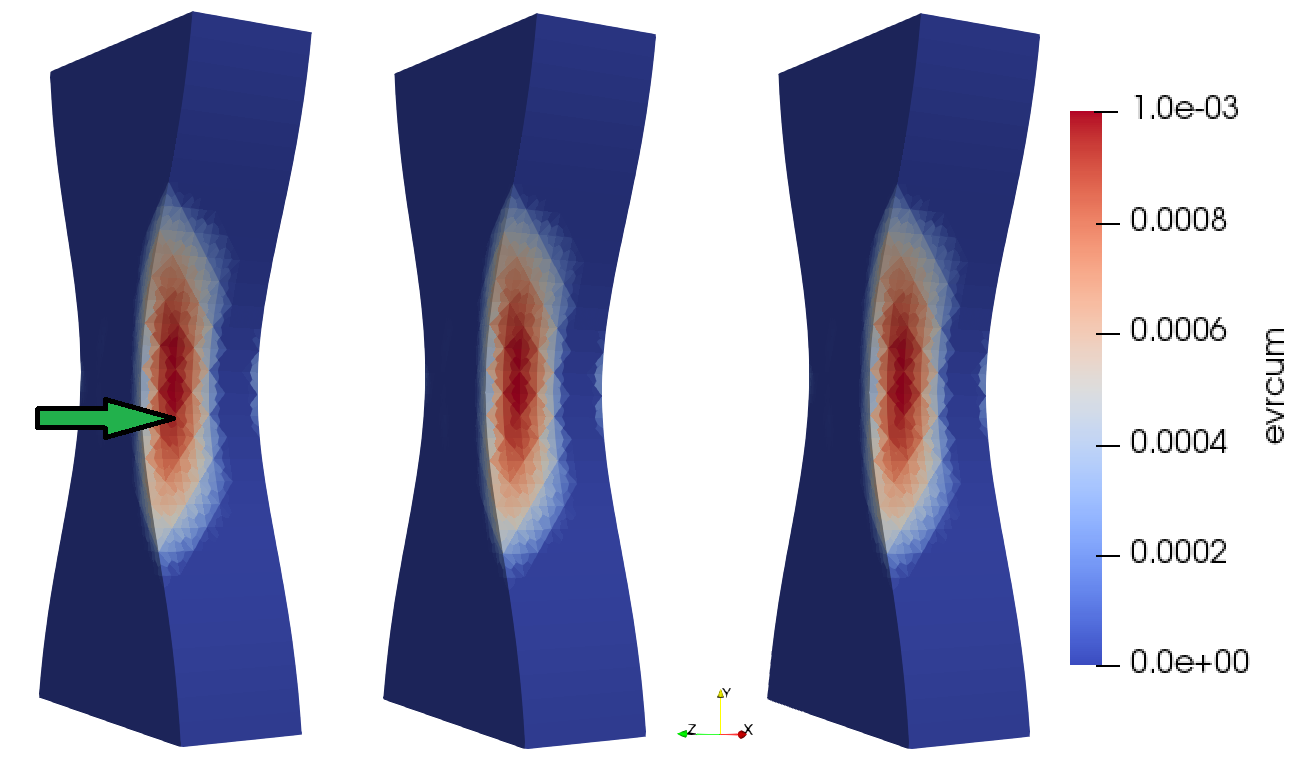}
 \includegraphics[width=0.49\textwidth]{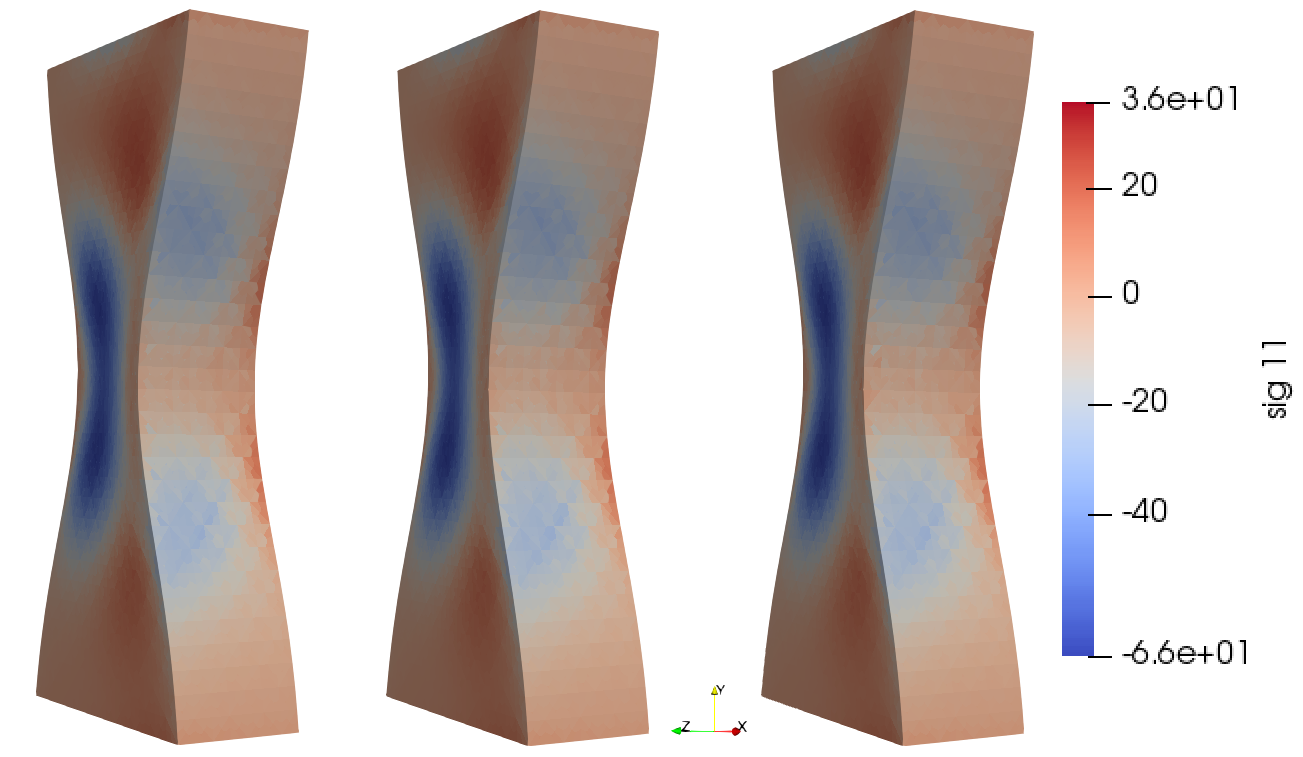}\\
\begin{minipage}{.09\linewidth}\end{minipage}\hfill\begin{minipage}{.11\linewidth}$p_{\rm ref.}$\end{minipage}\hfill\begin{minipage}{.11\linewidth}$\tilde{p}_{\rm nores.}$\end{minipage}\hfill\begin{minipage}{.11\linewidth}$\tilde{p}$\end{minipage}\hfill\begin{minipage}{.14\linewidth}\end{minipage}\hfill
\begin{minipage}{.11\linewidth}$\sigma_{11, {\rm ref.}}$\end{minipage}\hfill\begin{minipage}{.11\linewidth}$\tilde{\sigma}_{11, {\rm nores.}}$\end{minipage}\hfill
\begin{minipage}{.11\linewidth}$\tilde{\sigma}_{11}$\end{minipage}\hfil\begin{minipage}{.09\linewidth}\end{minipage}\\
  \input{OffComp1Comp2OnComp1evrcum.tex}
  \input{OffComp1Comp2OnComp1sig11.tex}
  \input{OffComp1Comp2OnComp1sig22.tex}
  \caption{\textit{offline} variability: \emph{computation 1} and \emph{computation 2}; \textit{online} variability: \emph{computation 1}. Top: representation of dual fields for the reference HF prediction of the \textit{online} variability, the ROM without enrichment, and the ROM with enrichment (left: $p$ at $t=50$s; right: $\sigma_{11}$ at $t=25$s); bottom: comparison of $p$, $\sigma_{11}$ and $\sigma_{22}$ at the point identified by the green arrow on the top-left picture.}
  \label{fig:acaRes}
\end{figure}

\setlength\figureheight{3.7cm}
\setlength\figurewidth{5cm}
\begin{figure}[H]
  \centering
 \includegraphics[width=0.49\textwidth]{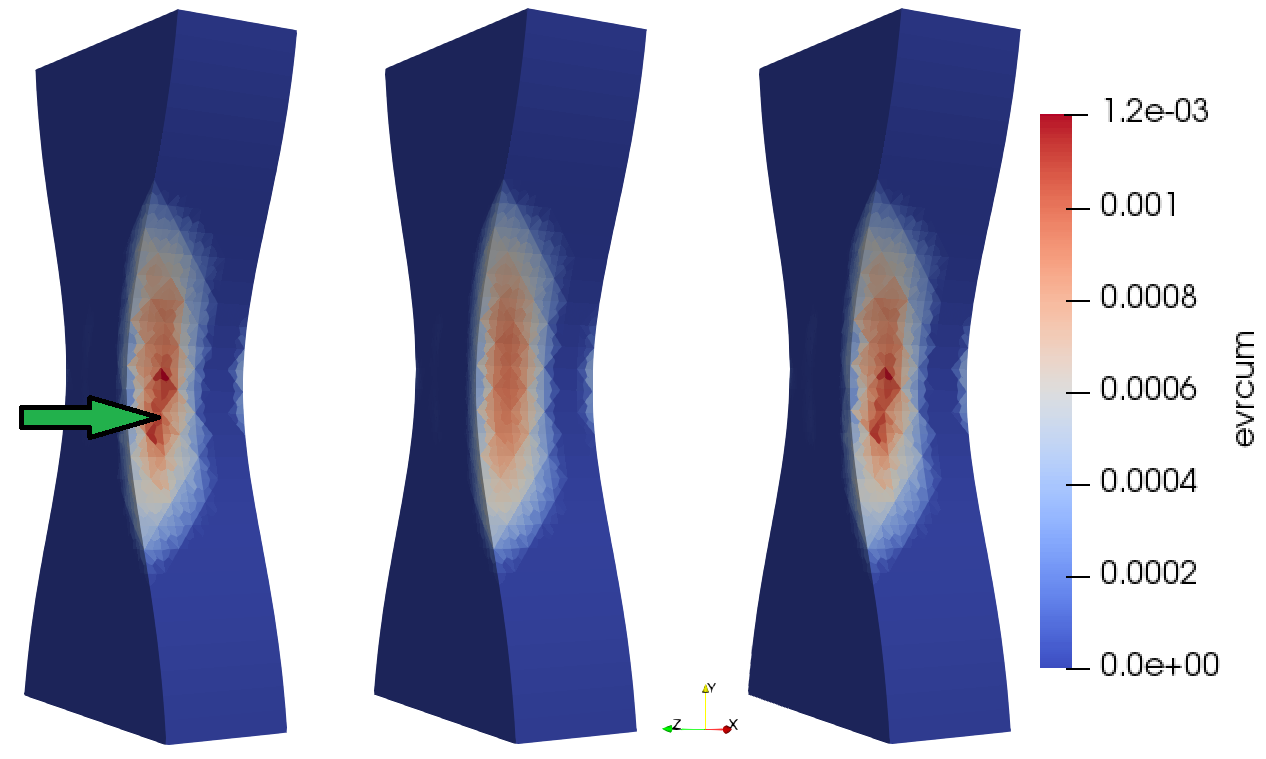}
 \includegraphics[width=0.49\textwidth]{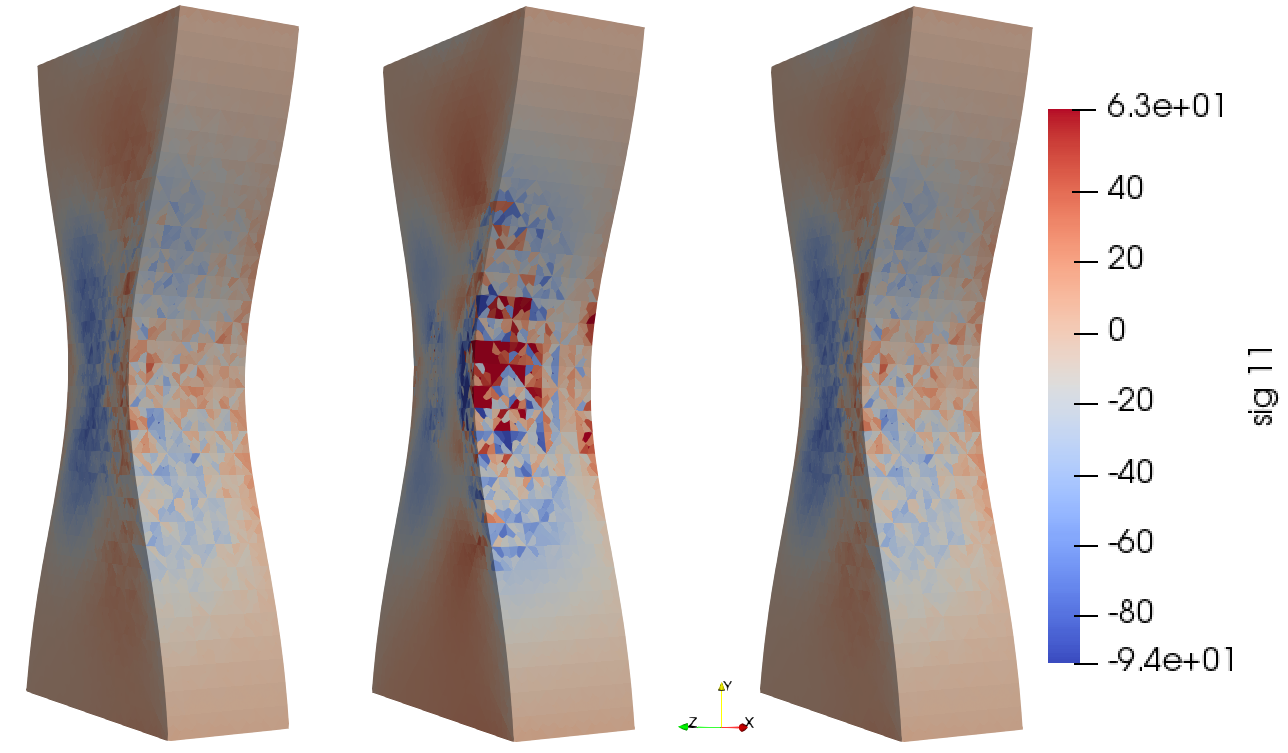}\\
\begin{minipage}{.09\linewidth}\end{minipage}\hfill\begin{minipage}{.11\linewidth}$p_{\rm ref.}$\end{minipage}\hfill\begin{minipage}{.11\linewidth}$\tilde{p}_{\rm nores.}$\end{minipage}\hfill\begin{minipage}{.11\linewidth}$\tilde{p}$\end{minipage}\hfill\begin{minipage}{.14\linewidth}\end{minipage}\hfill
\begin{minipage}{.11\linewidth}$\sigma_{11, {\rm ref.}}$\end{minipage}\hfill\begin{minipage}{.11\linewidth}$\tilde{\sigma}_{11, {\rm nores.}}$\end{minipage}\hfill
\begin{minipage}{.11\linewidth}$\tilde{\sigma}_{11}$\end{minipage}\hfil\begin{minipage}{.09\linewidth}\end{minipage}\\
  \input{OffComp1Comp2OnNewevrcum.tex}
  \input{OffComp1Comp2OnNewsig11.tex}
  \input{OffComp1Comp2OnNewsig22.tex} 
 \caption{\textit{offline} variability: \emph{computation 1} and \emph{computation 2}; \textit{online} variability: \emph{new}. Top: representation of dual fields for the reference HF prediction of the \textit{online} variability, the ROM without enrichment, and the ROM with enrichment (left: $p$ at $t=50$s; right: $\sigma_{11}$ at $t=25$s); bottom: comparison of $p$, $\sigma_{11}$ and $\sigma_{22}$ at the point identified by the green arrow on the top-left picture.}
  \label{fig:acaRes2}
\end{figure}

\subsection{High-pressure turbine blade}
\label{sec:indus}

We consider a simplified geometry of high-pressure turbine blade, featuring four internal cooling channels, introduced in~\cite{ijnme}. The lower part of the blade, referred as the foot, is modeled by an elastic material (we are not interested in predicting the plastic effects in this zone since it does not affect the blade's lifetime) whereas the upper part is modeled by an elastoviscoplastic law. The HFM is computed in parallel using Zset~\cite{zset} with an Adaptive MultiPreconditioned FETI solver~\cite{bovet2017}, see Figure~\ref{fig:subdomainsBlade}.

\begin{center}
\def\svgwidth{\textwidth}
\begin{figure}[H]
  \centering
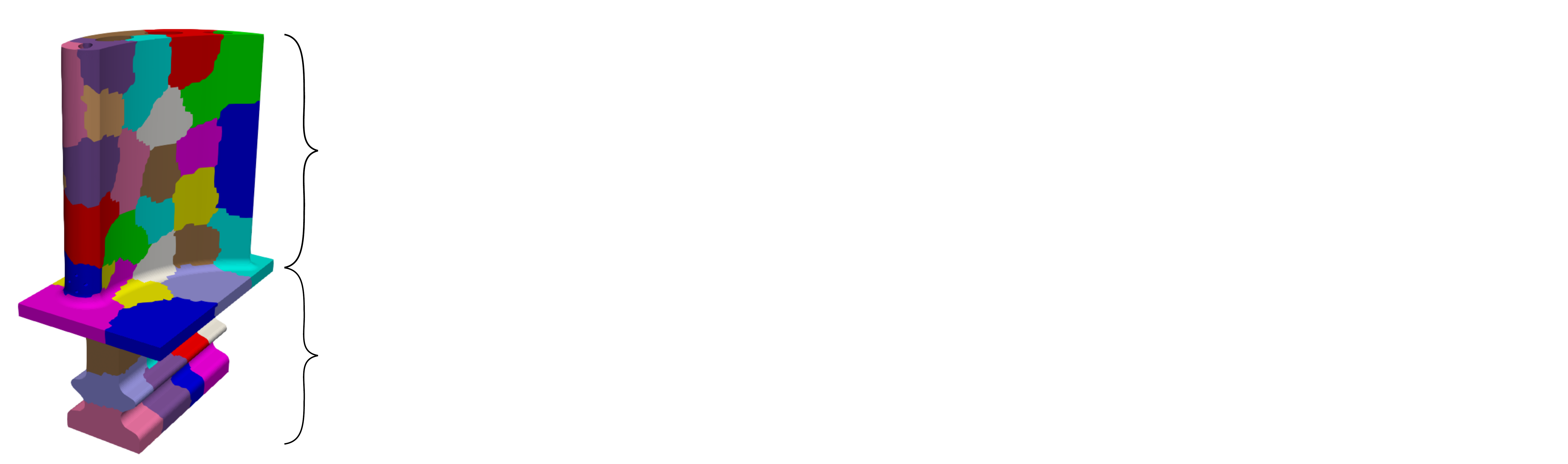
  \caption{a) structure split in 48 subdomains - the top part of the blade’s material is modeled by an elastoviscoplastic law and
the foot’s one by an elastic law, b) mesh for the high-pressure turbine blade with a zoom around the cooling channels.}
\label{fig:subdomainsBlade}
\end{figure}
\end{center}

The loading is different from the application of~\cite{ijnme} and is represented in Figure~\ref{fig:bladeLoading}: 10 temperature fields are considered, the coolest are applied for the lowest rotation speeds, whereas the hottest are applied for the highest rotation speeds. The \textit{online} variability differs from the \textit{offline} variability during the three time steps located around the last three maxima of the rotation speed profile, where only the temperature fields change as indicated by the two pictures at the right side of Figure~\ref{fig:bladeLoading}: the maximum of the temperature is moved from the center to the front of the top part of the blade. As we will see, this local modification will lead to large errors for the ROM if no enrichment strategy is considered.

\setlength\figureheight{5cm}
\setlength\figurewidth{5cm}
\begin{figure}[H]
  \centering
  \begin{minipage}{.37\linewidth}
\input{rot3.tex}
\vspace{0.cm}
   \end{minipage} \hfill
   \begin{minipage}{.62\linewidth}
\includegraphics[width=\textwidth]{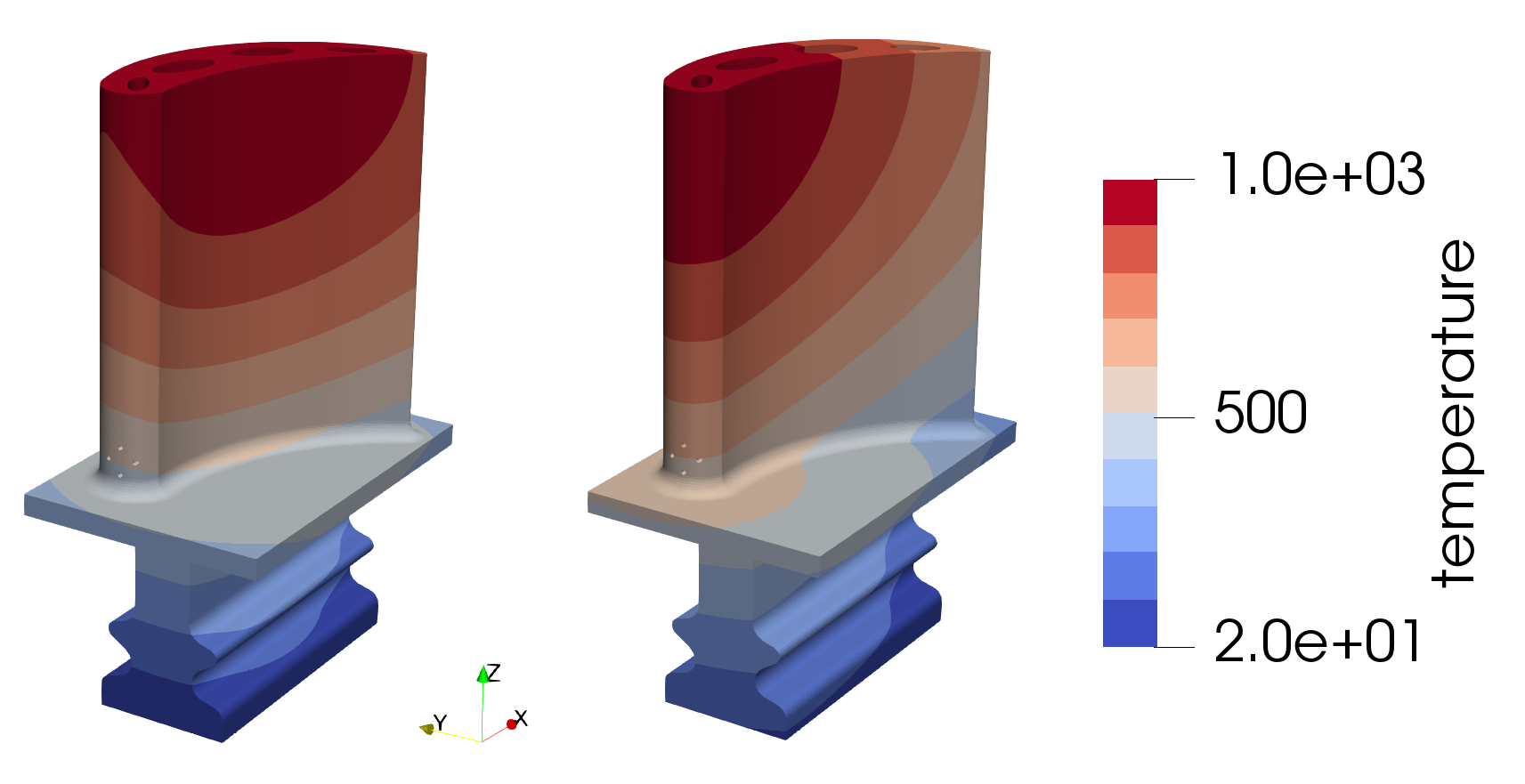}\\
\textit{offline} variability\qquad\quad \textit{online} variability
   \end{minipage}
  \caption{High-pressure turbine test case: left) rotation speed with respect to time; right) representation of maximum temperature fields used in the \textit{offline} and \textit{online} computations; the axis of rotation is located below the blade along the $x$-axis.}
  \label{fig:bladeLoading}
\end{figure}

The characteristics for the high pressure turbine blade case are given in Table~\ref{tab:bladecase}.
\begin{table}[H]
  \centering
\begin{tabular}{|c|c|}
  \hline &\\[-0.8em]
  number of dofs & 4,892'463 \\
  \hline &\\[-0.8em]
  number of (quadratic) tetrahedra & 1'136'732 \\
  \hline &\\[-0.8em]
  number of integration points & 5'683'660\\
  \hline &\\[-0.8em]
  number of time steps & 50\\
  \hline &\\[-0.8em]
 behavior law for the foot &\begin{tabular}{@{}c@{}}elas (temperature-dependent cubic elasticity\\and isotropic thermal expansion)\end{tabular} \\
  \hline &\\[-0.8em]
 behavior law for the blade &evp (Norton flow with nonlinear kinematic hardening \\
  \hline
\end{tabular}
  \caption{Characteristics for the high-pressure turbine blade test case.}
  \label{tab:bladecase}
\end{table}

The computation procedure is presented in Table~\ref{tab:bladecomputation}, all steps being computed in parallel with distributed memory, using MPI for the interprocess communications (48 processors within 2 nodes). The visualization is also parallel with distributed memory using a parallel version of Paraview~\cite{paraview1, paraview2}. 

\begin{table}
  \centering
\begin{tabular}{|c|c|}
  \hline&\\[-0.8em]
 step & algorithm \\
  \hline &\\[-0.8em]
 Data generation & AMPFETI solver in Zset, $\epsilon^{\rm HFM}_{\rm Newton}=10^{-5}$ \\
  \hline &\\[-0.5em]
 Data compression & Distributed Snapshot POD, $\epsilon_{\rm POD}=10^{-5}$ \\
 \hline &\\[-0.8em]
 Operator compression & Distributed NonNegative Orthogonal Matching Pursuit, $\epsilon_{\rm Op. comp.}=10^{-4}$ \\
 \hline &\\[-0.8em]
 Reduced order model & $\epsilon^{\rm ROM}_{\rm Newton}=10^{-4}$ \\
 \hline &\\[-0.5em]
 Dual quantities reconstruction & Distributed Gappy-POD, $\epsilon_{\rm Gappy-POD}=10^{-5}$ \\
  \hline\end{tabular}
  \caption{Description of the computational procedure.}
  \label{tab:bladecomputation}
\end{table}

The correlations between the ROM-Gappy-POD residual $\mathcal{E}$~\eqref{eq:error_ind_1} and the error $E$~\eqref{eq:error_1} on the dual quantities cumulated plasticity $p$ and stress tensor $\sigma$ are investigated in Table~\ref{tab:blade}. This time, we carry-out the calibration process independently on each subdomain. The same conclusion as the academic test cases can be drawn for the correlations between the ROM-Gappy-POD residual $\mathcal{E}$ and the error $E$ on the subdomains 28 and 47 (see Figure~\ref{fig:subdomainsBlade} for the localization of these subdomains).

\setlength\figureheight{3.5cm}
\setlength\figurewidth{3.3cm}
\begin{table}[H]
\centering
\begin{tabular}{>{\centering\arraybackslash}m{6em}|>{\centering\arraybackslash}m{19.25em}|>{\centering\arraybackslash}m{19.25em}|}
\cline{2-3}
 & $p$ & $\sigma_{xx}$ \\ \hline
\multicolumn{1}{|>{\centering\arraybackslash}m{6em}|}{}&&\\[-0.5em]
\multicolumn{1}{|>{\centering\arraybackslash}m{6em}|}{subdomain 28} & \input{indicator_evrcum_Computation1-028.tex}\input{scatter_evrcum-028.tex}
  & \input{indicator_sig22_Computation1-028.tex}\input{scatter_sig22-028.tex} \\ \hline
\multicolumn{1}{|>{\centering\arraybackslash}m{6em}|}{}&&\\[-0.5em]
\multicolumn{1}{|>{\centering\arraybackslash}m{6em}|}{subdomain 47} &  \input{indicator_evrcum_Computation1-047.tex}\input{scatter_evrcum-047.tex} & \input{indicator_sig11_Computation1-047.tex}\input{scatter_sig11-047.tex} \\ \hline
\end{tabular}
\caption{Illustration of the correlation between the ROM-Gappy-POD residual $\mathcal{E}$~\eqref{eq:error_ind_1} and the error $E$~\eqref{eq:error_1} on the dual quantities cumulated plasticity $p$ and a component of the stress tensor $\sigma$.}
\label{tab:blade}
\end{table}

In Table~\ref{tab:blade1}, we compare the error indicator~\eqref{eq:indicator} with the error~\eqref{eq:error_1} for the considered \textit{offline} and \textit{online} variabilities. As for the academic test cases, the values of the error indicator are larger than the error except for very large errors (for which the ROM is useless), and sometimes in the last time steps, as residual forces build up. Without enrichment, the ROM makes very large error. We observe that the subdomain for which the enrichment criterion is used enables to control the error on the corresponding subdomain, whereas the error is larger in the other subdomain. This illustrates that local (in space) quantities of interest can be considered to prevent the enrichment steps to occur too often when it's not needed.

\setlength\figureheight{3.5cm}
\setlength\figurewidth{3.3cm}
\begin{table}[H]
\centering
\begin{tabular}{|>{\centering\arraybackslash}m{8.5em}|*{2}{>{\centering\arraybackslash}m{18em}|}}\hline
&&\\[-1.em]
\backslashbox{enrichment}{plot}
&\makebox[18em]{subdomain 28}&\makebox[18em]{subdomain 47}\\\hline
&&\\[-0.5em]
no enrichment &\input{norestart_indicator_evrcum_Online-028.tex}\input{norestart_indicator_sig22_Online-028.tex}&\input{norestart_indicator_evrcum_Online-047.tex}\input{norestart_indicator_sig11_Online-047.tex}\\\hline
&&\\[-0.5em]
monitoring subdomain 28 &\input{sd28_indicator_evrcum_Online-028.tex}\input{sd28_indicator_sig22_Online-028.tex}&\input{sd28_indicator_evrcum_Online-047.tex}\input{sd28_indicator_sig11_Online-047.tex}\\\hline
&&\\[-0.5em]
monitoring subdomain 47 &\input{sd47_indicator_evrcum_Online-028.tex}\input{sd47_indicator_sig22_Online-028.tex}&\input{sd47_indicator_evrcum_Online-047.tex}\input{sd47_indicator_sig11_Online-047.tex}\\\hline
\end{tabular}
\caption{Comparison of the error indicator~\eqref{eq:indicator} with the error~\eqref{eq:error_1} for the considered \textit{offline} and \textit{online} variabilities. The category ``plot'' for the columns refers to the subdomain for which the error indicator and the error are plotted, whereas the category ``enrichment'' for the rows refers to the subdomain of whom the indicator is used to decide the enrichment step.}
\label{tab:blade1}
\end{table}

In Figures~\ref{fig:bladeRes1} and~\ref{fig:bladeRes2} are illustrated various predictions of dual quantities: the index ``off.'' refers to the HF prediction for the \textit{offline} variability, ``ref.'' to the HF reference for the \textit{online} variability, ``nores.'' to the ROM without enrichment, ``sd28'' to the ROM with enrichment while monitoring the error indicator on subdomain 28, and ``sd47'' to the ROM with enrichment while monitoring the error indicator on subdomain 47. We observe that without enrichment, the ROM suffers from large errors. With enrichment, the monitored subdomain enjoys an accurate ROM prediction. Particularly in Figure~\ref{fig:bladeRes2}, the conclusions hold when the HF reference for the \textit{online} variability is visually different from the HF prediction for the \textit{offline} variability.

\setlength\figureheight{4.3cm}
\setlength\figurewidth{6cm}
\begin{figure}[H]
  \centering
\def\svgwidth{0.48\textwidth}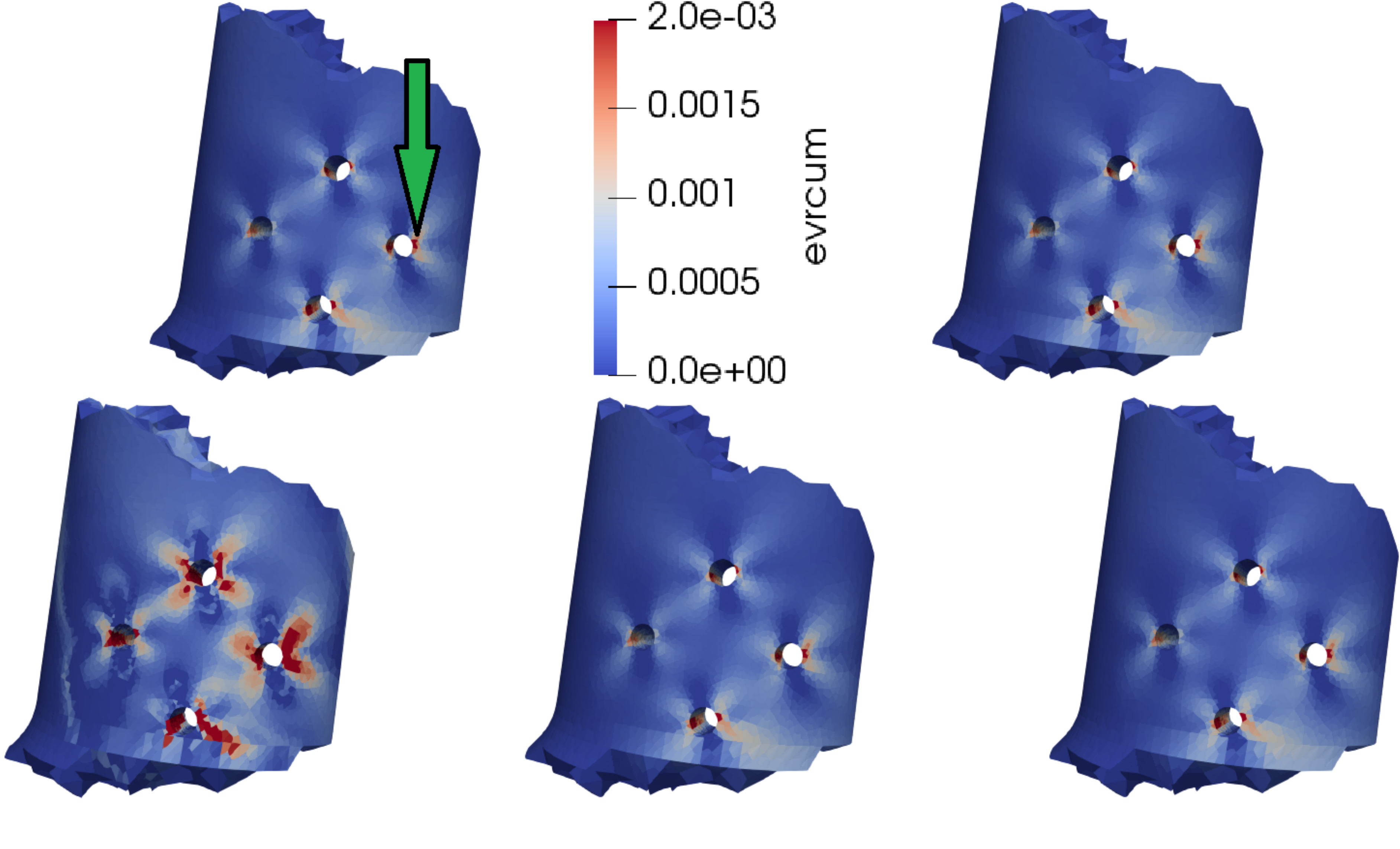
\hspace{0.02\textwidth}
\def\svgwidth{0.48\textwidth}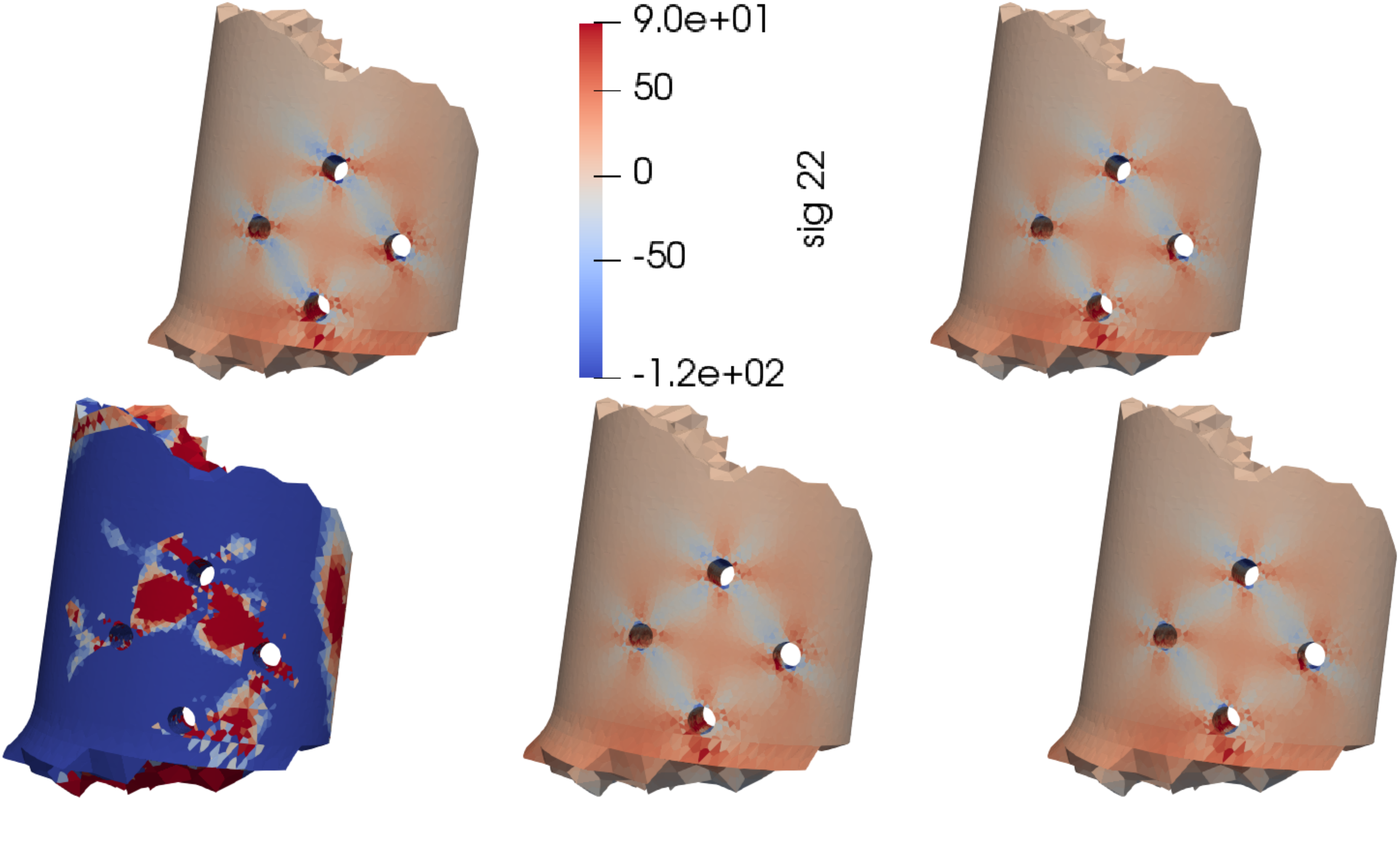
\\[1em]
  \input{evrcum_028.tex}
  \input{sig22_028.tex}
  \caption{Top: diverse HF and ROM dual quantity fields at $t=43.5s$ for subdomain 28: left $p$, right $\sigma_{22}$; bottom: comparison at the point identified by the green arrow on the top-left picture. The components of the stress tensor are in MPa.}
\label{fig:bladeRes1}
\end{figure}

\begin{figure}[H]
  \centering
\def\svgwidth{0.48\textwidth}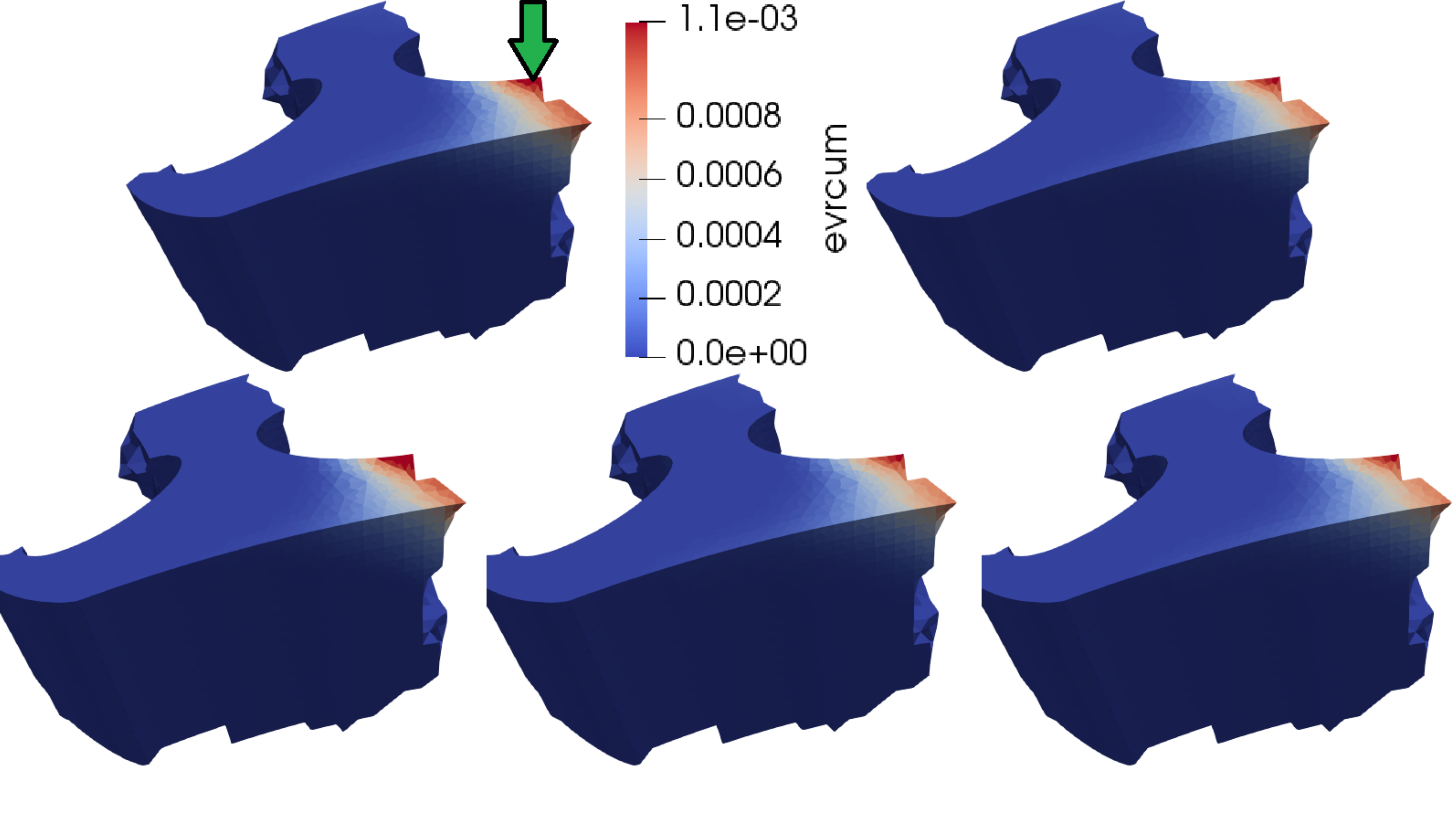
\hspace{0.02\textwidth}
\def\svgwidth{0.48\textwidth}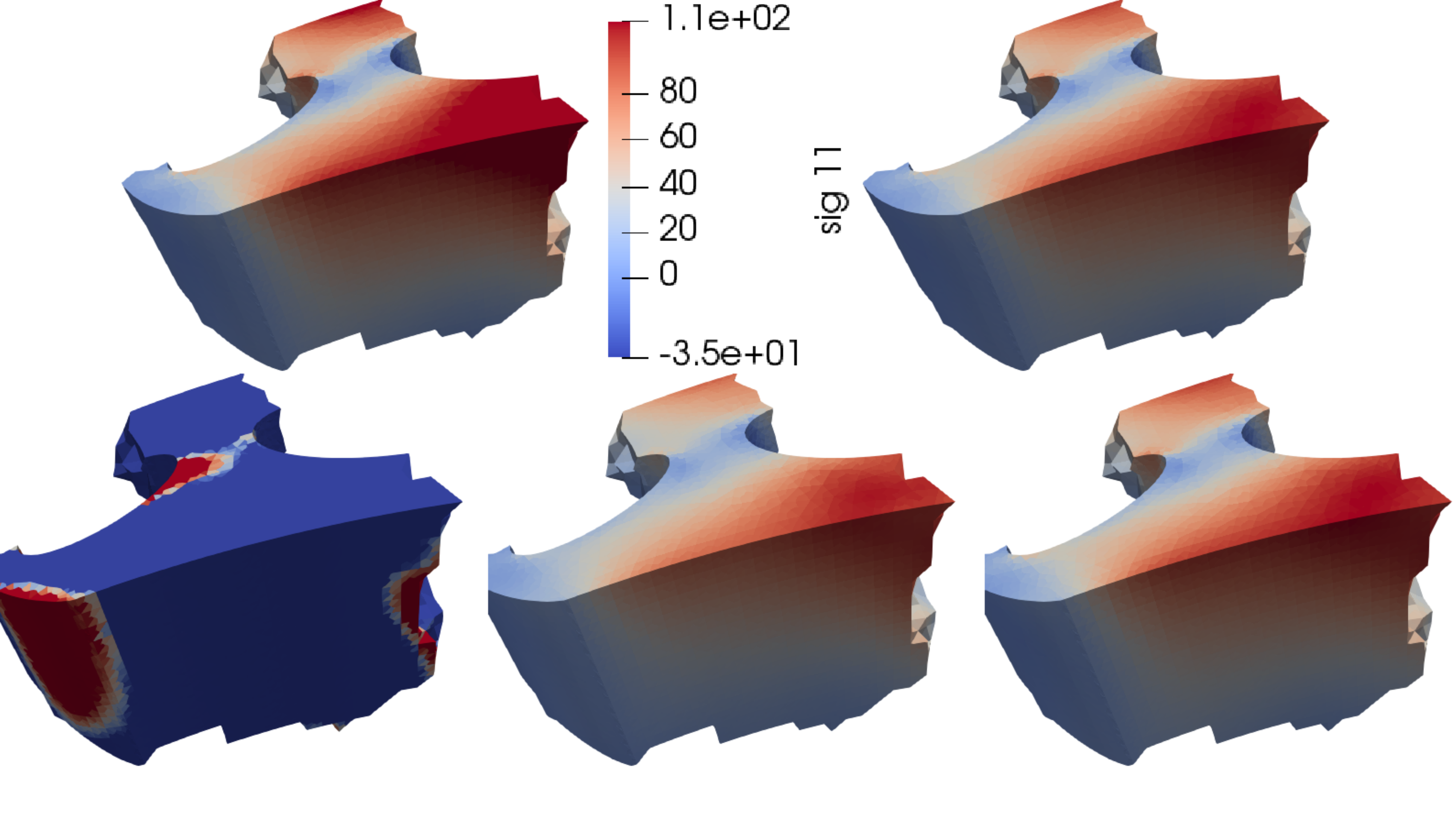
\\[1em]
  \input{evrcum_047.tex}
  \input{sig11_047.tex}
  \caption{Top: diverse HF and ROM dual quantity fields at $t=43.5s$ for subdomain 47: left $p$, right $\sigma_{11}$; bottom: comparison at the point identified by the green arrow on the top-left picture. The components of the stress tensor are in MPa.}
\label{fig:bladeRes2}
\end{figure}

Finally, we represent various predictions of dual quantities on the complete structure in Figure~\ref{fig:bladeRes3}. The ROM without enrichment shows a cumulated plasticity with large errors around the cooling channel, whereas the stress prediction has large errors on the complete structure.

\begin{figure}[H]
  \centering
\def\svgwidth{0.48\textwidth}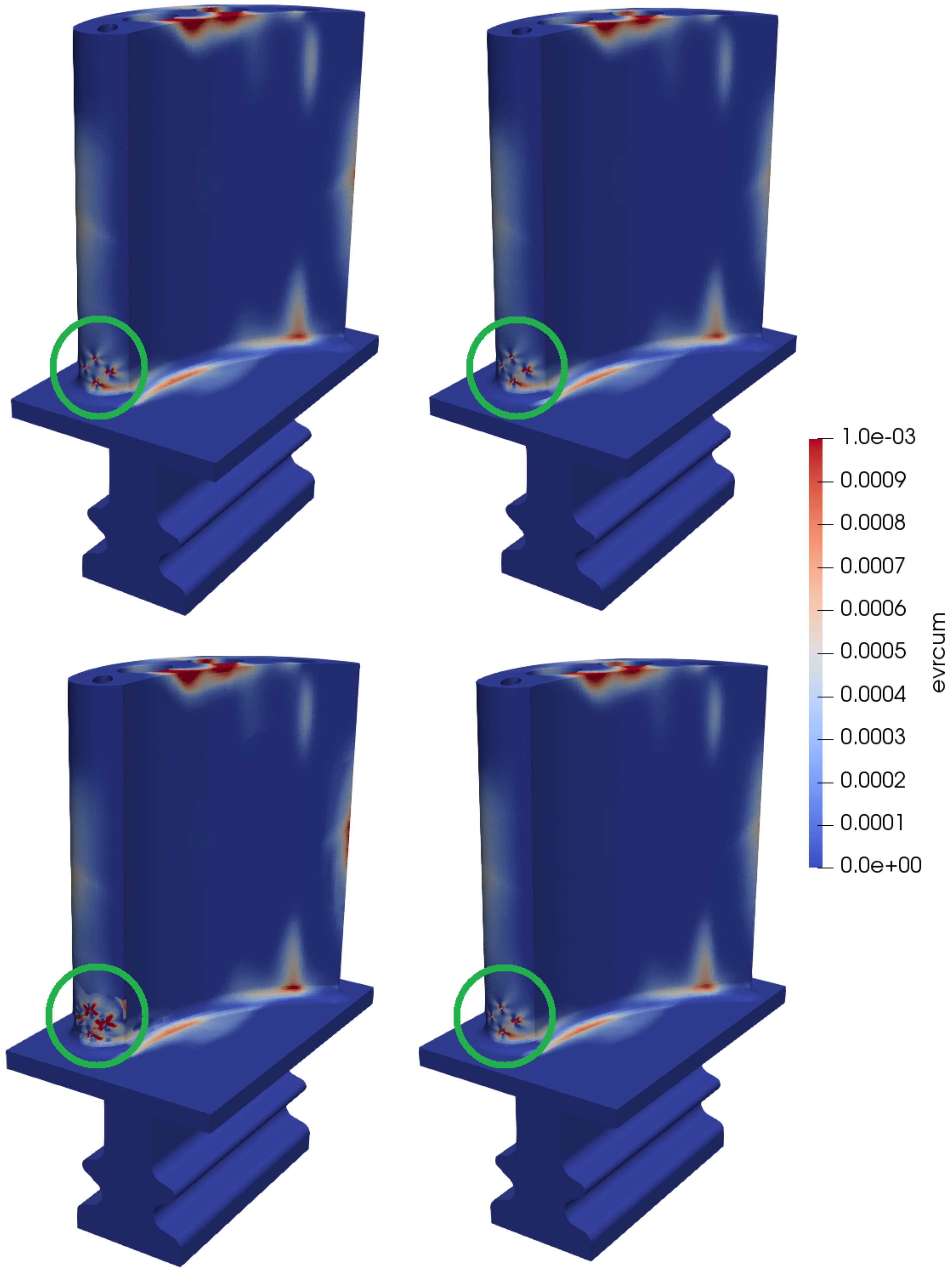
\hspace{0.02\textwidth}
\def\svgwidth{0.48\textwidth}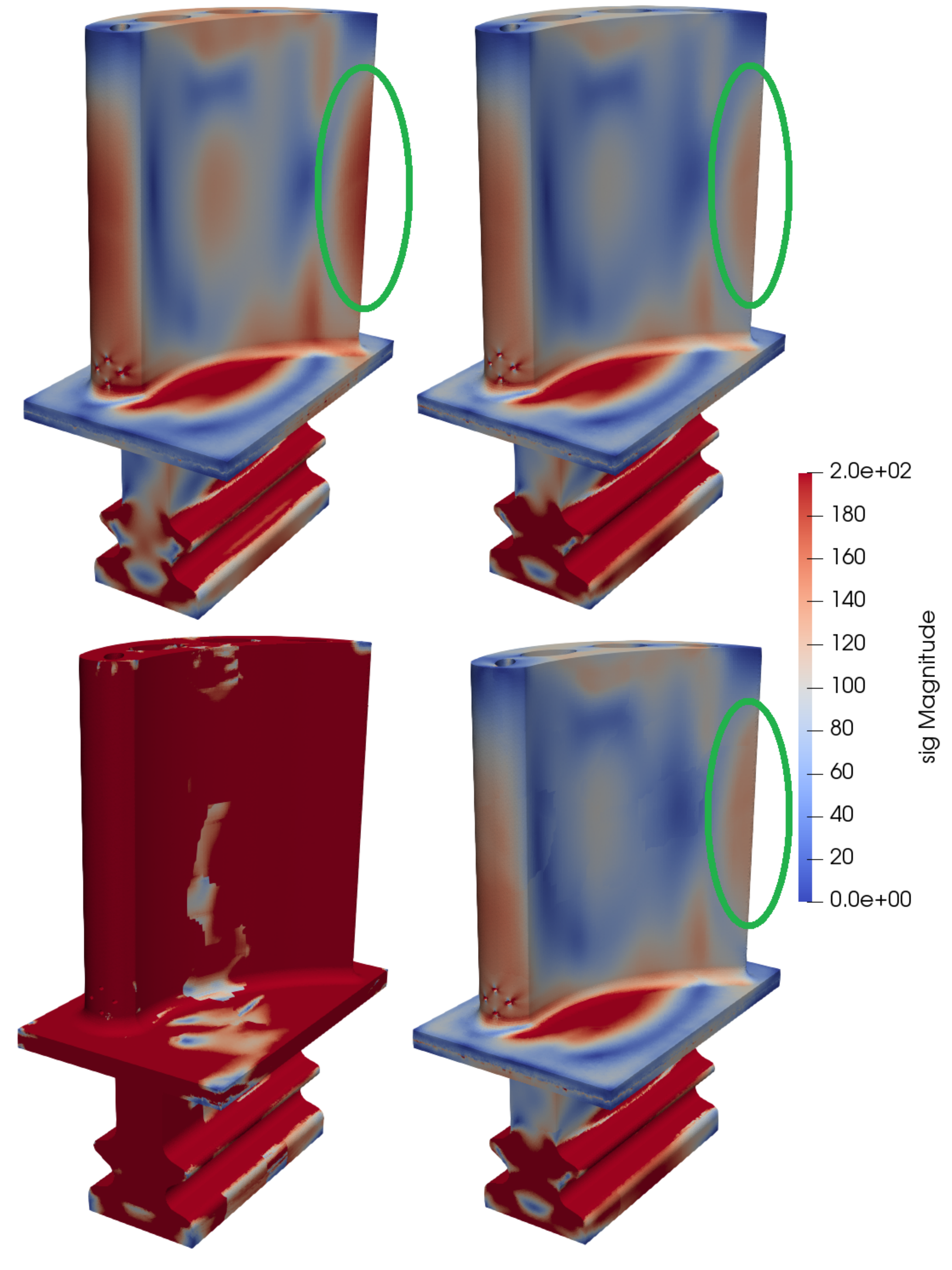
  \caption{Complete ROM dual quantity fields at $t=43.5s$, with enrichment by monitoring subdomain 28: left cumulated plasticity, right magnitude of the stress tensor.}
  \label{fig:bladeRes3}
\end{figure}

{The test cases presented in this section enable to make the two following observations:
\begin{itemize}
\item[\textbf{[O1]}] in the \textit{a posteriori} reduction of elastoviscoplastic computation, \textit{online} variabilities of the temperature loading not encountered during the \textit{offline} stage can lead to important errors,
\item[\textbf{[O2]}] the ROM-Gappy-POD residual~\eqref{eq:error_ind_1} is highly correlated to the error~\eqref{eq:error_1}, so that the proposed error indicator~\eqref{eq:indicator} can be used in the \textit{online} stage as described in the workflow illustrated in Figure~\ref{fig:workflow_online} to correct \textit{online} variabilities of the temperature loading not encountered during the \textit{offline} stage.
\end{itemize}
}

\section{Conclusion and outlook}
\label{sec:conclusion}

In this work, we considered the model order reduction of structural mechanics with elastoviscoplastic behavior laws, with dual quantities such as cumulated plasticity and stress tensor as quantities of interest. We observed in our numerical experiments a strong correlation between the ROM-Gappy-POD residual of the reconstruction of these dual quantities and the global error. From this observation, we proposed an efficient error indicator by means of Gaussian process regression from the data acquired when solving the high-fidelity problem in the learning phase of the reduced order modeling. We illustrated the ability of the error indicator to enrich a reduced order model when the \textit{online} variability cannot be predicted using the current reduced order basis, leading to an accurate reduced prediction.

For the moment, the reduced order model is enriched using a complete reference high-fidelity computation, and the POD and Gappy-POD are recomputed. In a future work, we need to consider restart strategies to call the high-fidelity solver only at the time step of enrichment, from a complete mechanical state reconstructed from the prediction of the reduced order model at the previous time step, which can introduce additional errors. We also need to consider incremental strategies for the POD and Gappy-POD updates.


\section*{Acknowledgement}
{This research was funded by the French Fonds Unique Interministériel (MOR\_DICUS).}


\section*{Abbreviations and notations}

{The following abbreviations are used in this manuscript:\\

\noindent 
\begin{tabular}{@{}ll}
POD & Proper Orthogonal Decomposition\\
HF(M) & High-Fidelity (Model)\\
ROM & Reduced Order Model\\
\end{tabular}

\vspace{0.5cm}

\noindent{The following notations are used in this manuscript:}\\

\noindent 
{
\begin{tabular}{@{}ll}
$u$ & high-fidelity displacement field\\
$\hat{u}$ & reduced displacement field\\
$p$ & high-fidelity cumulated plasticity field\\
$\tilde{p}$ & reduced cumulated plasticity field reconstructed by Gappy-POD\\
$\boldsymbol{p}$ & vector of component the value of the high-fidelity cumulated plasticity field at the reduced\\
&integration points\\
$\hat{\boldsymbol{p}}$ & vector of component the cumulated plasticity computed by the behavior law solver at the\\
&reduced integration  points during the online phase. Notice that this vector is not obtained by taking\\
&the value of some field at the reduced integration points.\\
$\tilde{\boldsymbol{p}}$ & vector of component the value of the reduced cumulated plasticity field reconstructed by Gappy-POD\\
&at the reduced integration points\\
$E^p$ & relative error, defined in~\eqref{eq:error_1}\\
$\mathcal{E}^p$ & ROM-Gappy-POD residual, defined in~\eqref{eq:error_ind_1}\\
${\rm Gpr}^p\left(\mathcal{E}^p\right)$ & proposed error indicator, defined in~\eqref{eq:indicator}\\
$p_{\rm off}$ & reference high-fidelity cumulated plasticity field at the considered \textit{offline} variability\\
$p_{\rm ref}$ & reference high-fidelity cumulated plasticity field at the considered \textit{online} variability\\
$\tilde{p}_{\rm nores}$ & reduced cumulated plasticity field reconstructed by Gappy-POD without enrichement (no restart)\\
\end{tabular}
The same notations as the ones on the cumulated plasticity are used for all the dual quantities.
}
}

\bibliographystyle{plain}
\bibliography{biblio}

\begin{thebibliography}{10}

\bibitem{gaturbine}
File:gaturbineblade.svg.
\newblock {Wikipedia, the free encyclopedia, image under the Creative Commons
  Attribution-Share Alike 3.0 Unported license}, 2009.

\bibitem{paraview1}
J.~Ahrens, B.~Geveci, and C.~Law.
\newblock Paraview: An end-user tool for large data visualization,
  visualization handbook.
\newblock Elsevier, 2005.

\bibitem{Akkari2014}
N.~Akkari, A.~Hamdouni, E.~Liberge, and M.~Jazar.
\newblock On the sensitivity of the pod technique for a parameterized
  quasi-nonlinear parabolic equation.
\newblock {\em Advanced Modeling and Simulation in Engineering Sciences},
  1(1):1--14, Aug 2014.

\bibitem{amaldi}
E.~Amaldi and V.~Kann.
\newblock On the approximability of minimizing nonzero variables or unsatisfied
  relations in linear systems.
\newblock {\em Theoretical Computer Science}, 209(1-2):237--260, 1998.

\bibitem{coolingchannels1}
S.~Amaral, T.~Verstraete, R.~Van~den Braembussche, and T.~Arts.
\newblock Design and optimization of the internal cooling channels of a high
  pressure turbine blade—part i: Methodology.
\newblock {\em Journal of Turbomachinery}, 132, 2010.

\bibitem{paraview2}
U.~Ayachit.
\newblock The paraview guide: A parallel visualization application.
\newblock Kitware, 2015.

\bibitem{EIM1}
M.~Barrault, Y.~Maday, N.-C. Nguyen, and A.~T. Patera.
\newblock An 'empirical interpolation' method: application to efficient
  reduced-basis discretization of partial differential equations.
\newblock {\em Comptes Rendus Mathematique}, 339(9):667 -- 672, 2004.

\bibitem{bovet2017}
C.~Bovet, A.~Parret-Fréaud, N.~Spillane, and P.~Gosselet.
\newblock Adaptive multipreconditioned feti: Scalability results and robustness
  assessment.
\newblock {\em Computers \& Structures}, 193:1 -- 20, 2017.

\bibitem{buhr}
A.~Buhr, C.~Engwer, M.~Ohlberger, and Rave S.
\newblock A numerically stable a posteriori error estimator for reduced basis
  approximations of elliptic equations.
\newblock {\em 11th World Congress on Computational Mechanics, WCCM 2014, 5th
  European Conference on Computational Mechanics, ECCM 2014 and 6th European
  Conference on Computational Fluid Dynamics, ECFD 2014}, pages 4094--4102,
  2014.

\bibitem{superalloys}
P.~Caron and O.~Lavigne.
\newblock Recent studies at onera on superalloys for single crystal turbine
  blades.
\newblock {\em AerospaceLab}, pages 1--14, 2011.

\bibitem{casenave2012}
F.~Casenave.
\newblock Accurate a posteriori error evaluation in the reduced basis method.
\newblock {\em Comptes Rendus Mathematique}, 350(9-10):539--542, 2012.

\bibitem{ijnme}
F.~Casenave, N.~Akkari, F.~Bordeu, C.~Rey, and D.~Ryckelynck.
\newblock A nonintrusive distributed reduced order modeling framework for
  nonlinear structural mechanics -- application to elastoviscoplastic
  computations.
\newblock {\em submitted}.

\bibitem{casenave2014}
F.~Casenave, A.~Ern, and T.~Leli{\`e}vre.
\newblock Accurate and online-efficient evaluation of the a posteriori error
  bound in the reduced basis method.
\newblock {\em ESAIM: Mathematical Modelling and Numerical
  Analysis-Mod{\'e}lisation Math{\'e}matique et Analyse Num{\'e}rique},
  48(1):207--229, 2014.

\bibitem{chamoin2017posteriori}
L.~Chamoin, F.~Pled, P.-E. Allier, and P.~Ladev{\`e}ze.
\newblock A posteriori error estimation and adaptive strategy for pgd model
  reduction applied to parametrized linear parabolic problems.
\newblock {\em Computer Methods in Applied Mechanics and Engineering},
  327:118--146, 2017.

\bibitem{POD1}
A.~Chatterjee.
\newblock An introduction to the proper orthogonal decomposition.
\newblock {\em Current Science}, 78(7):808--817, 2000.

\bibitem{chen2018robust}
Y.~Chen, J.~Jiang, and A.~Narayan.
\newblock A robust error estimator and a residual-free error indicator for
  reduced basis methods.
\newblock {\em Computers \& Mathematics with Applications}, 2018.

\bibitem{chinesta2013pgd}
F.~Chinesta, A.~Leygue, F.~Bordeu, J.~V. Aguado, E.~Cueto, D.~Gonz{\'a}lez,
  I.~Alfaro, A.~Ammar, and A.~Huerta.
\newblock Pgd-based computational vademecum for efficient design, optimization
  and control.
\newblock {\em Archives of Computational Methods in Engineering}, 20(1):31--59,
  2013.

\bibitem{cowles1996}
B.~A. Cowles.
\newblock High cycle fatigue in aircraft gas turbines—an industry
  perspective.
\newblock {\em International Journal of Fracture}, 80(2-3):147--163, 1996.

\bibitem{Everson95}
R.~Everson and L.~Sirovich.
\newblock Karhunen--lo\`{e}ve procedure for gappy data.
\newblock {\em J. Opt. Soc. Am. A}, 12(8):1657--1664, Aug 1995.

\bibitem{ECSW1}
C.~Farhat, P.~Avery, T.~Chapman, and J.~Cortial.
\newblock Dimensional reduction of nonlinear finite element dynamic models with
  finite rotations and energy-based mesh sampling and weighting for
  computational efficiency.
\newblock {\em International Journal for Numerical Methods in Engineering},
  98(9):625--662, 2014.

\bibitem{ECSW2}
C.~Farhat, T.~Chapman, and P.~Avery.
\newblock Structure-preserving, stability, and accuracy properties of the
  energy-conserving sampling and weighting method for the hyper reduction of
  nonlinear finite element dynamic models.
\newblock {\em International Journal for Numerical Methods in Engineering},
  102(5):1077--1110, 2015.

\bibitem{henneron:hal-01107810}
T.~Henneron, H.~Mac, and S.~Clenet.
\newblock {Error estimation of a proper orthogonal decomposition reduced model
  of a permanent magnet synchronous machine}.
\newblock In {\em {Computation in Electromagnetics (CEM 2014), 9th IET
  International Conference on}}, pages 1--6, London, United Kingdom, 2014.
  {IEEE}.

\bibitem{ECM}
J.~A. Hernandez, M.~A. Caicedo, and A.~Ferrer.
\newblock Dimensional hyper-reduction of nonlinear finite element models via
  empirical cubature.
\newblock {\em Computer methods in applied mechanics and engineering},
  313:687--722, 2017.

\bibitem{Kammann2012metho-18466}
E.~Kammann, F.~Tröltzsch, and S.~Volkwein.
\newblock A method of a-posteriori error estimation with application to proper
  orthogonal decomposition.
\newblock 2012.

\bibitem{ladeveze2013toward}
P.~Ladev{\`e}ze and L.~Chamoin.
\newblock Toward guaranteed pgd-reduced models.
\newblock {\em Bytes and Science. CIMNE: Barcelona}, pages 143--154, 2013.

\bibitem{ladeveze1999application}
P.~Ladev{\`e}ze and A.~Chouaki.
\newblock Application of a posteriori error estimation for structural model
  updating.
\newblock {\em Inverse problems}, 15(1):49, 1999.

\bibitem{LUO20074184}
Z.~Luo, J.~Zhu, R.~Wang, and I.~M. Navon.
\newblock Proper orthogonal decomposition approach and error estimation of
  mixed finite element methods for the tropical pacific ocean reduced gravity
  model.
\newblock {\em Computer Methods in Applied Mechanics and Engineering},
  196(41):4184 -- 4195, 2007.

\bibitem{machiels2000output}
L.~Machiels, Y.~Maday, I.~B. Oliveira, A.~T. Patera, and D.~V. Rovas.
\newblock Output bounds for reduced-basis approximations of symmetric positive
  definite eigenvalue problems.
\newblock {\em Comptes Rendus de l'Academie des Sciences-Series I-Mathematics},
  331(2):153--158, 2000.

\bibitem{madaykolmogorov}
Y.~Maday, O.~Mula, and G.~Turinici.
\newblock Convergence analysis of the generalized empirical interpolation
  method.
\newblock {\em SIAM Journal on Numerical Analysis}, 54(3):1713--1731, 2016.

\bibitem{EIM2}
Y.~Maday, N.-C. Nguyen, A.~T. Patera, and S.~H. Pau.
\newblock A general multipurpose interpolation procedure: the magic points.
\newblock {\em Communications on Pure and Applied Analysis}, 8(1):383--404,
  2009.

\bibitem{maday2002global}
Y.~Maday, A.~Patera, and .~Turinici.
\newblock Global a priori convergence theory for reduced-basis approximations
  of single-parameter symmetric coercive elliptic partial differential
  equations.
\newblock {\em Comptes rendus de l'Acad{\'e}mie des sciences. S{\'e}rie I,
  Math{\'e}matique}, 335(3):289--294, 2002.

\bibitem{maday2002priori}
Y.~Maday, A.~T. Patera, and G.~Turinici.
\newblock A priori convergence theory for reduced-basis approximations of
  single-parameter elliptic partial differential equations.
\newblock {\em Journal of Scientific Computing}, 17(1-4):437--446, 2002.

\bibitem{matchingpursuit}
S.~G. Mallat and Z.~Zhang.
\newblock Matching pursuits with time-frequency dictionaries.
\newblock {\em IEEE Transactions on Signal Processing}, 41(12):3397--3415, Dec
  1993.

\bibitem{manzoni2014efficient}
A.~Manzoni.
\newblock An efficient computational framework for reduced basis approximation
  and a posteriori error estimation of parametrized navier--stokes flows.
\newblock {\em ESAIM: Mathematical Modelling and Numerical Analysis},
  48(4):1199--1226, 2014.

\bibitem{mazur2005474}
Z.~Mazur, A.~Luna-Ramírez, J.A. Juárez-Islas, and A.~Campos-Amezcua.
\newblock Failure analysis of a gas turbine blade made of inconel 738lc alloy.
\newblock {\em Engineering Failure Analysis}, 12(3):474 -- 486, 2005.

\bibitem{zset}
{Mines ParisTech and ONERA the French aerospace lab}.
\newblock Zset: nonlinear material \& structure analysis suite.
\newblock http://www.zset-software.com, 1981-present.

\bibitem{OHLBERGER2013901}
M.~Ohlberger and S.~Rave.
\newblock Nonlinear reduced basis approximation of parameterized evolution
  equations via the method of freezing.
\newblock {\em Comptes Rendus Mathematique}, 351(23):901 -- 906, 2013.

\bibitem{ECSW3}
A.~Paul-Dubois-Taine and D.~Amsallem.
\newblock An adaptive and efficient greedy procedure for the optimal training
  of parametric reduced-order models.
\newblock {\em International Journal for Numerical Methods in Engineering},
  102(5):1262--1292, 2015.

\bibitem{scikit-learn}
F.~Pedregosa, G.~Varoquaux, A.~Gramfort, V.~Michel, B.~Thirion, O.~Grisel,
  M.~Blondel, P.~Prettenhofer, R.~Weiss, V.~Dubourg, J.~Vanderplas, A.~Passos,
  D.~Cournapeau, M.~Brucher, M.~Perrot, and E.~Duchesnay.
\newblock Scikit-learn: Machine learning in {P}ython.
\newblock {\em Journal of Machine Learning Research}, 12:2825--2830, 2011.

\bibitem{ryckelynck2013estimation}
D.~Ryckelynck.
\newblock Estimation d'erreur d'hyperr{\'e}duction de probl{\`e}mes
  {\'e}lastoviscoplastiques.
\newblock {\em 21{\`e}me Congr{\`e}s Fran{\c{c}}ais de M{\'e}canique, 2013,
  Bordeaux, France}, 2013.

\bibitem{incrementalPOD}
D.~Ryckelynck, F.~Chinesta, E.~Cueto, and A.~Ammar.
\newblock On thea priori model reduction: Overview and recent developments.
\newblock {\em Archives of Computational methods in Engineering},
  13(1):91--128, 2006.

\bibitem{ryckelynck:hal-01237733}
D.~Ryckelynck, L.~Gallimard, and S.~Jules.
\newblock {Estimation of the validity domain of hyper-reduction approximations
  in generalized standard elastoviscoplasticity}.
\newblock {\em {Advanced Modeling and Simulation in Engineering Sciences}},
  2(1):19 p., 2015.

\bibitem{coatings}
U.~Schulz, C.~Leyens, K.~Fritscher, M.~Peters, B.~Saruhan, O.~Lavigne, J.-M.
  Dorvaux, M.~Poulain, R.~Mévrel, and M.~Caliez.
\newblock Some recent trends in research and technology of advanced thermal
  barrier coatings.
\newblock 7:73--80, 2003.

\bibitem{POD2}
L.~Sirovich.
\newblock Turbulence and the dynamics of coherent structures, parts {I}, {II}
  and {III}.
\newblock {\em Quarterly of Applied Mathematics}, XLV:561–590, 1987.

\bibitem{troltzsch2009pod}
F.~Tr{\"o}ltzsch and S.~Volkwein.
\newblock Pod a-posteriori error estimates for linear-quadratic optimal control
  problems.
\newblock {\em Computational Optimization and Applications}, 44(1):83, 2009.

\bibitem{coolingchannels2}
T.~Verstraete, S.~Amaral, R.~Van~den Braembussche, and T.~Arts.
\newblock Design and optimization of the internal cooling channels of a high
  pressure turbine blade—part ii: Optimization.
\newblock {\em Journal of Turbomachinery}, 132, 2010.

\bibitem{doi:10.1002/num.20393}
A/~Wang and Y.~Ma.
\newblock An error estimate of the proper orthogonal decomposition in model
  reduction and data compression.
\newblock {\em Numerical Methods for Partial Differential Equations},
  25(4):972--989, 2009.

\bibitem{fastNNOMP}
M.~Yaghoobi, D.~Wu, and M.~E. Davies.
\newblock Fast non-negative orthogonal matching pursuit.
\newblock {\em IEEE Signal Processing Letters}, 22(9):1229--1233, 2015.

\bibitem{yano2014space}
M.~Yano.
\newblock {A Space-Time Petrov--Galerkin Certified Reduced Basis Method:
  Application to the Boussinesq Equations}.
\newblock {\em SIAM Journal on Scientific Computing}, 36(1):A232--A266, 2014.

\end{thebibliography}
\end{document}